\documentclass[letterpaper,10pt]{article}


\usepackage{latexsym}
\usepackage{amscd}
\usepackage{graphicx}
\usepackage{amsmath}
\usepackage{amssymb}
\usepackage{mathrsfs}
\usepackage{amsthm}
\usepackage{bbm}
\usepackage{color}
\usepackage{accents}
\usepackage{enumerate}
\usepackage{epsfig}
\usepackage{psfrag}
\usepackage{hyperref}
\input xy
\xyoption{all}


\setcounter{tocdepth}{2}


\newcommand{\N}{\mathbb{N}}                     
\newcommand{\Z}{\mathbb{Z}}                     
\newcommand{\R}{\mathbb{R}}                     
\newcommand{\C}{\mathbb{C}}                     
\newcommand{\D}{\mathbb{D}}                     
\newcommand{\im}{\mathrm{Im\,}}                 
\newcommand{\re}{\mathrm{Re\,}}                 
\newcommand{\supp}{\mathrm{supp\,}}             

\newcommand{\Cal}{\mathrm{CAL}}		
\newcommand{\Fix}{\mathrm{Fix}}		



\newtheorem{thm}{\sc Theorem}[section]          
\newtheorem*{thm*}{\sc Theorem}               	
\newtheorem*{cor*}{\sc Corollary}        		
\newtheorem{lem}[thm]{\sc Lemma}            
\newtheorem{prop}[thm]{\sc Proposition}     
\newtheorem{add}[thm]{\sc Addendum}         

\title{Systolic ratio, index of closed orbits and convexity for tight contact forms on the three-sphere}

\author{Alberto Abbondandolo, Barney Bramham, \\ Umberto L.~Hryniewicz, Pedro A.~S.~Salom\~ao}

\date{}

\begin{document}

\maketitle

\begin{abstract} 
We construct a dynamically convex contact form on the three-sphere whose systolic ratio is arbitrarily close to 2. This example is related to a conjecture of Viterbo, whose validity would imply that  the systolic ratio of a convex contact form does not exceed 1. We also construct a sequence of tight contact forms $\alpha_n$, $n\geq 2$, with systolic ratio arbitrarily close to $n$ and suitable bounds on the mean rotation number of all the closed orbits of the induced Reeb flow.
\end{abstract}



\section{Introduction}

\subsection{Dynamically convex contact forms on the three-sphere}

Let $\alpha$ be a contact form on the three-sphere $S^3$, i.e.\ a smooth 1-form such that $\alpha\wedge d\alpha$ is a volume form. The kernel of $\alpha$ is called the contact structure induced by $\alpha$.
We denote by $R_\alpha$  the Reeb vector field of $\alpha$, i.e.\ the unique vector field on $S^3$ determined by the conditions $d\alpha(R_\alpha, \cdot) = 0$ and $\alpha(R_\alpha)= 1$. The flow of this vector field is called the Reeb flow of $\alpha$. 

A closed Reeb orbit of $\alpha$ is a pair $(\gamma,T)$, where $\gamma:\R \to S^3$ is a periodic trajectory of the Reeb flow of $\alpha$  and $T>0$ is a period of $\gamma$ (not necessarily the minimal one).  The set of all closed Reeb orbits of $\alpha$ is denoted by $\mathcal{P}(\alpha)$. The action of $(\gamma,T)$, i.e.\ the quantity 
\[
\mathcal{A}(\gamma,T):=\int_{\R/T\Z} \gamma^*\alpha=T,
\]
coincides with the period $T$, since $\alpha(\dot \gamma)= \alpha(R_{\alpha}\circ \gamma)= 1$. The minimal period, or minimal action, of $\alpha$ is the positive number 
\[
T_{\rm min}(\alpha) := \min_{(\gamma,T) \in \mathcal{P}(\alpha)} \mathcal{A}(\gamma,T)=\min_{(\gamma,T) \in \mathcal{P}(\alpha)}T.
\]
The fact that Reeb flows on $S^3$ always have closed orbits (see \cite{rab78} and \cite{hof93}, or \cite{tau07} for the case of a general closed three-manifold) ensures that $T_{\min}(\alpha)$ is well-defined.
This quantity scales linearly when $\alpha$ is multiplied by a constant: $T_{\min}(c\alpha) = |c| T_{\min}(\alpha)$ for all $c\in \R\setminus \{0\}$. The scale invariant quantity
\[
\rho_{\mathrm{sys}}(\alpha) := \frac{T_{\min}(\alpha)^2}{\mathrm{vol}(S^3,\alpha\wedge d\alpha)}
\]
is called the systolic ratio of $\alpha$. It is a natural generalization of a corresponding quantitative invariant in Riemannian and Finlser geometry, see \cite{apb14,abhs17b}.

Two contact forms $\alpha$ and $\beta$ are said to be equivalent if $\beta=\varphi^* \alpha$ for some diffeomorphism $\varphi$. In this case, the contact volumes are the same and the Reeb flows are conjugated by $\varphi$. In particular, $\alpha$ and $\beta$ have the same systolic ratio. 

In this paper, we are interested in tight contact forms on $S^3$, which can be characterized as those contact forms which are equivalent to contact forms inducing the standard contact structure $\ker \alpha_0$, where the contact form $\alpha_0$ is the restriction to $S^3=\{z\in \C^2 \mid |z|=1\}$ of the Liouville form
\[
\Lambda:=\frac{1}{2} \sum_{j=1}^2 (x_j \, dy_j - y_j \, dx_j),
\]
where $(x_1+iy_,x_2+iy_2)$ are the standard coordinates in $\C^2$. In other words, tight contact forms on $S^3$ are, up to equivalence, of the form $\alpha=f\alpha_0$, where $f$ is a non-vanishing smooth real function on $S^3$. Since $T_{\min}$ and the contact volume do not change when $\alpha$ is replaced by $-\alpha$, we may assume that the function $f$ is positive. 

Tight contact forms on $S^3$ are precisely the contact forms which are induced by starshaped hypersurfaces in $\C^2$. More precisely, we consider a compact domain $A\subset \C^2$ whose interior contains the origin and whose boundary $\partial A$ is smooth and transverse to the radial direction. Then $\Lambda$ restricts to a contact form on $\partial A$, and the diffeomorphism $S^3 \rightarrow \partial A$ which is given by the radial projection pulls this contact form back to a tight contact form $\alpha_A$ on $S^3$. More precisely,
\[
\alpha_A = a^2 \alpha_0,
\]
where $a:S^3 \rightarrow (0,+\infty)$ is the smooth function such that
\[
A = \{ rz \mid z\in S^3, \; 0\leq r \leq a(z)\}.
\]
The above formula shows that all tight contact forms are obtained in this way.
If two starshaped domains are symplectomorphic, i.e.\ diffeomorphic by a diffeomorphism which preserves the standard symplectic form 
\[
d\Lambda = dx_1\wedge dy_1 + dx_2\wedge dy_2,
\]
then the corresponding contact forms on $S^3$ are equivalent. 

When the domain $A$ happens to be convex, we say that the contact form $\alpha_A$ is convex. Convexity is not preserved by symplectomorphisms of $\C^2$, but we can saturate this definition by declaring to be convex any contact form on $S^3$ which is equivalent to one of the form $\alpha_A$, with $A$ convex.

Similarly, a contact form on $S^3$ is said to be strongly convex if it is equivalent to one of the form $\alpha_A$, with $A$ strongly convex, meaning that $\partial A$ has positive sectional curvature.

Convexity and strong convexity are difficult to characterize by purely symplectic or contact topological conditions. However, Hofer, Wysocki and Zehnder proved that if the contact form $\alpha$ on $S^3$ is strongly convex, then all the closed orbits $(\gamma,T)$ of the Reeb flow $\phi^t_{R_{\alpha}}$ of $\alpha$ have Conley-Zehnder index at least 3:
\[
\mu(\gamma,T)\geq 3.
\]
Here, the Conley-Zehnder index of $(\gamma,T)$ is defined to be the Conley-Zehnder index of the path of symplectic automorphisms of $\R^2$ which is obtained from
\[
d\phi^t_{R_{\alpha}}|_{\ker \alpha(\gamma(0))} : \ker \alpha(\gamma(0)) \rightarrow \ker \alpha(\gamma(t))
\]
once the contact structure $\ker \alpha$ is trivialized by a global symplectic trivialization which is isotopic to the standard global trivialization of $\ker \alpha_0$. Actually, the fact that closed orbits might be degenerate forces one to work with a suitable extension of the Conley-Zehnder index to degenerate symplectic paths. See Sections \ref{ssection_CZ_index} and \ref{Sec:fromTtoS} below for more details on the Conley-Zehnder index and on the standard global trivialization of $\ker \alpha_0$. 

Tight contact forms on $S^3$ all of whose closed Reeb orbits have Conley-Zehnder index at least 3 are called dynamically convex. Dynamical convexity is on the nose an invariant notion, meaning that if $\alpha$ is dynamically convex, so is any contact form which is equivalent to $\alpha$.

Strongly convex contact forms are dynamically convex, but there is at the time of writing no known example of a dynamically convex contact form which is not strongly convex. The difficulty in finding such an example is due to the fact that, besides the conditions on the Conley-Zehnder index of closed orbits, it is difficult to come up with properties of a contact form which imply that it is not equivalent to some $\alpha_A$ with $A$ convex, or strongly convex.

Convexity plays an important role in a very interesting conjecture of Viterbo. Its original formulation in \cite{vit00} states that if $c$ is a symplectic capacity on domains of $\C^n$ then
\begin{equation}
\label{vit}
c(A)^n \leq n! \, \mathrm{vol}(A)
\end{equation}
for any compact convex domain $A\subset \C^n$, with the equality holding if and only if $A$ is symplectomorphic to a ball. Here, $\mathrm{vol}$ denotes the Euclidean $2n$-dimensional volume. This conjecture is wide open, except for the trivial case $n=1$ in which any symplectic capacity must agree with the area and hence the equality holds in \eqref{vit}. In \cite{vit00}, Viterbo proved a weaker inequality, in which the coefficient $n!$ is replaced by a faster growing function of $n$. This result was improved by Artstein-Avidan, Milman and Ostrover, who proved \eqref{vit} with $n!$ replaced by $C \,n!$ for some large number $C$, see \cite{amo08}. The sharp inequality \eqref{vit} is known only for special convex domains, such as domains which are invariant under the multiplication by complex numbers of modulus 1, and, in the case $n=2$ and for the Hofer-Zehnder capacity, for all convex domains which are close enough to a ball, see \cite[Theorem 1]{abhs17b}. It is also interesting to notice that the sharp bound \eqref{vit} in the case of the Hofer-Zehnder capacity and for special domains of the form $A=K \times K^{\circ}$, where $K$ is a compact convex centrally symmetric neighborhood of the origin in $\R^n$ and $K^{\circ}\subset i\R^n$ denotes its polar, implies the $n$-dimensional Mahler conjecture in convex geometry, see \cite{ako14}.

Now let's go back to the case $n=2$. It is well known that the Hofer-Zehnder capacity of a compact convex domain $A\subset \C^2$ with smooth boundary coincides with $T_{\min}(\alpha_A)$, see \cite[Proposition 4]{hz90}. Moreover, the Euclidean 4-dimensional volume of $A$ coincides with twice its volume with respect to the volume form $d\Lambda\wedge d\Lambda$ on $\C^2$, and by Stokes theorem with twice the volume of $\partial A$ with respect to $\alpha_A\wedge d\alpha_A$. Therefore, the 4-dimensional Viterbo conjecture for the Hofer-Zehnder capacity is equivalent to the fact that
\begin{equation}
\label{vit2}
\rho_{\mathrm{sys}}(\alpha)\leq 1
\end{equation}
for every convex contact form on $S^3$, with the equality holding if and only if all Reeb orbits of $\alpha$ are closed and have the same period. The latter assertion follows from the fact that a starshaped domain $A\subset \C^2$ is symplectomorphic to a closed ball if and only if all Reeb orbits of $\alpha_A$ are closed and have the same period, see \cite[Proposition 4.3]{abhs17b}. All of this, except for the last assertion, would extend to any $n$, but here we focus on the case $n=2$. Notice also that the validity of \eqref{vit2} for strongly convex contact forms would imply it for all convex contact forms, because on the space of convex contact forms the systolic ratio is $C^0$-continuous.

Many symplectic results holding for strongly convex contact forms extend to dynamically convex ones, the reason being that modern symplectic techniques involving $J$-holomorphic curves do not distinguish between strong convexity and dynamical convexity. Therefore, one is tempted to attack the 4-dimensional Viterbo conjecture for the Hofer-Zehnder capacity by proving the bound \eqref{vit2} for dynamically convex contact forms on $S^3$. Our first result excludes this possibility:

\begin{thm}
\label{T:main_DC}
For every $\epsilon>0$ there is a dynamically convex contact form $\alpha$ on $S^3$ such that
\[
2-\epsilon < \rho_{\mathrm{sys}}(\alpha) < 2.
\]
In particular, the supremum of the systolic ratio over all dynamically convex contact forms on $S^3$ is at least 2.
\end{thm}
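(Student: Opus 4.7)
The strategy is to produce $\alpha$ as a mapping torus / suspension of a carefully chosen area-preserving diffeomorphism of the closed disk, adapting the framework used by the authors in their earlier work on systolic inequalities. Concretely, I fix a symplectic form $\omega$ on $\D$ with $\int_\D\omega = A$, a smooth area-preserving map $\psi:(\D,\omega)\to(\D,\omega)$ whose boundary restriction is a rigid rotation of angle $2\pi\rho_\partial$, and a smooth positive roof function $\tau_\psi:\D\to\R$; the open book construction with disk pages and binding $\partial\D$ then assembles a smooth tight contact form $\alpha_\psi$ on $S^3$ whose Reeb flow admits $\D$ as a disk-like global surface of section with first-return map $\psi$ and first-return time $\tau_\psi$, with $\partial\D$ a closed Reeb orbit of action $A$. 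The basic identities in this model are
\begin{equation*}
\mathrm{vol}(S^3,\alpha_\psi\wedge d\alpha_\psi) \,=\, \int_\D \tau_\psi\,\omega, \qquad \mathcal{A}(\gamma_x) \,=\, \sum_{j=0}^{k-1}\tau_\psi\bigl(\psi^j(x)\bigr),
\end{equation*}
for every $k$-periodic point $x$ of $\psi$, where $\gamma_x$ denotes the associated closed Reeb orbit.

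I then choose $\psi$ to be a smooth \emph{pseudo-rotation} of $\D$, with no interior periodic points, whose boundary rotation $\rho_\partial$ is an irrational number slightly larger than $1/2$, and I tune $\tau_\psi$ to be uniformly close to $A/2$ on $\D$. Since $\psi$ has no interior periodic points, $\partial\D$ is the unique closed Reeb orbit of $\alpha_\psi$; hence $T_{\min}(\alpha_\psi) = A$, the volume is close to $A^2/2$, and consequently $\rho_{\mathrm{sys}}(\alpha_\psi)$ is close to $2$. Keeping $\tau_\psi > A/2$ pointwise gives $\rho_{\mathrm{sys}}(\alpha_\psi) < 2$; pushing $\tau_\psi$ closer to $A/2$ gives $\rho_{\mathrm{sys}}(\alpha_\psi) > 2-\epsilon$. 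Dynamical convexity reduces to proving $\mu(\partial\D, A)\geq 3$, since $\partial\D$ is the only closed Reeb orbit. This Conley-Zehnder index is determined by the asymptotic transversal rotation of the Reeb flow along the binding, which in the standard trivialization of $\ker\alpha_0$ is encoded by $\rho_\partial$ together with the framing coming from the open book; the parameters can be arranged so that $\mu(\partial\D, A) = 3$.

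The principal obstacle is to exhibit a single smooth triple $(\omega,\psi,\tau_\psi)$ realizing all constraints simultaneously: area preservation for prescribed total area $A$, pseudo-rotation behaviour with prescribed irrational boundary rotation $\rho_\partial$ slightly above $1/2$, a uniform return time close to $A/2$, and compatibility of the suspension with the smooth structure of $S^3$ (so the construction yields $S^3$ rather than a lens space). Pseudo-rotations on $\D$ are classically produced by Anosov-Katok type iterative schemes, and the mapping torus can be arranged to give $S^3$ by matching the boundary rotation data with the standard Hopf open book; however, simultaneously forcing $\tau_\psi \approx A/2$ while keeping the mapping-torus data smooth across $\partial\D$ (and $\alpha_\psi$ smoothly extending over the binding with the correct asymptotic behaviour) requires a delicate interpolation between boundary and interior models, and is where the bulk of the technical work will lie. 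A secondary subtlety, needed for the strict inequality $\rho_{\mathrm{sys}}<2$, is a quantitative argument ruling out the idealized limit in which $\tau_\psi\equiv A/2$.
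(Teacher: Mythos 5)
There is a genuine gap, and it sits at the center of your construction: a smooth diffeomorphism of the closed disk whose restriction to $\partial\D$ is a rotation by an irrational angle cannot have ``no interior periodic points''. By Brouwer's fixed point theorem it has a fixed point, and since the boundary rotation is irrational that fixed point lies in the interior (indeed, the Anosov--Katok schemes you invoke produce maps with exactly \emph{one} periodic point, which is an interior fixed point). Consequently $\partial\D$ is \emph{not} the unique closed Reeb orbit of $\alpha_\psi$: the interior fixed point $x_0$ gives a closed orbit of action $\tau_\psi(x_0)$. Under your requirement that $\tau_\psi$ be uniformly close to $A/2$, this action is close to $A/2<A$, so $T_{\min}(\alpha_\psi)\approx A/2$ rather than $A$, and the systolic ratio comes out close to $(A/2)^2/(A^2/2)=1/2$, not to $2$. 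Trying to repair this by demanding $\tau_\psi(x_0)\geq A$ while keeping the average of $\tau_\psi$ near $A/2$ runs into a further obstruction: for a map with a single periodic point, results in the spirit of Hutchings relating the Calabi invariant to the mean action of periodic orbits prevent the unique fixed point from having large action when the Calabi invariant is very negative; in any case you provide no mechanism forcing the return time to be large precisely at the fixed point that Brouwer forces upon you. Finally, dynamical convexity would also have to be verified for the orbit through $x_0$ and all its iterates, not only for the binding.

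The paper's construction is engineered exactly to resolve this tension. The return map is a composition $\varphi=\varphi_-\circ\varphi_+$, where $\varphi_+$ rotates most of the disk by $\pi$ and $\varphi_-$ is a strong negative twist supported on two regions that $\varphi_+$ swaps. All fixed points of $\varphi$ are thereby forced to lie outside those regions, where the return time is $\geq\pi$; the locus where the return time is close to $\pi/2$ consists only of points of period $\geq 2$, whose total action (the sum of at least two return times) is again $\geq\pi$. This is how one obtains $T_{\min}=\pi$ simultaneously with volume close to $\pi^2/2$, and the index bounds of Proposition \ref{map2} combined with the index shift $+4k$ of Addendum \ref{adde2} yield dynamical convexity. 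Any viable version of your approach must build in this kind of period-doubling structure rather than aiming for a map without interior periodic points.
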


In particular, when $\epsilon\leq 1$ the contact form $\alpha$ of the above theorem is either a counterexample to the Viterbo conjecture or the first example of a dynamically convex contact form on $S^3$ which is not convex. Regrettably, we do not know which of these two alternatives holds. 
We also remark that is not known whether the supremum of the systolic ratio over all dynamically convex contact forms on $S^3$ is finite.

The proof of Theorem \ref{T:main_DC} is based on a construction which we introduced in \cite{abhs17b}. The argument consists in constructing a special area preserving diffeomorphism $\varphi$ of the disk $\D:=\{z\in \C:|z|\leq 1\}$ which is supported in the interior of $\D$, whose Calabi invariant is small when compared to the action of its periodic points, and such that the Conley-Zehnder index of all its fixed points is at least $-1$ and that of all its $k$-periodic points is at least $1-3k$. Such a disk map is then embedded as the first return map to a disk-like global surface of section for the Reeb flow of a contact form on $S^3$, which turns out to be tight because it is contact isotopic to $\alpha_0$, dynamically convex because of the bounds on the Conley-Zehnder indices of periodic points of $\varphi$, and whose systolic ratio belongs to the interval $(2-\epsilon,2)$ because of the bounds on the action of periodic points and on the Calabi invariant of $\varphi$. The novelty with respect to \cite[Theorem 2]{abhs17b} consists in the careful analysis which is necessary in order to get the lower bounds on the Conley-Zehnder indices of the periodic points of $\varphi$ and in the study of the relationship between the Conley-Zehnder indices of closed orbits of a Reeb flow admitting a global surface of section and the Conley-Zehnder indices of the corresponding periodic points of the first return map.

\subsection{Higher systolic ratios and the mean rotation number of closed orbits}

In \cite[Theorem 2]{abhs17b} we constructed tight contact forms on $S^3$ whose systolic ratio is arbitrarily large. These contact forms are not dynamically convex, and our next aim here is to understand more about what happens to the Conley-Zehnder indices of closed Reeb orbits when one tries to make the systolic ratio high. In our construction closed orbits with negative Conley-Zehnder index appear. When such an orbit is iterated its Conley-Zehnder index becomes more and more negative, so we do not expect the Conley-Zehnder indices of closed orbits of a Reeb flow with high systolic ratio to be bounded from below. And since the  $k$-th iteration of a closed orbit of minimal period $T$ might produce after a perturbation a closed orbit of minimal period close to $kT$ and index close to the index of the $k$-th iteration of the original orbit, we do not expect the Conley-Zehnder indices of simple closed orbits to be bounded from below either. What we can expect to have good lower bounds on, is instead the mean rotation number of closed orbits, whose definition we now recall.

Let $\alpha$ be a tight contact form on $S^3$ and let $(\gamma,T)$ be a closed orbit of the Reeb flow of $\alpha$. We define the mean rotation number of $(\gamma,T)$ as half the mean Conley-Zehnder index of $(\gamma,T)$, normalized by the period:
\[
\overline{\rho}(\gamma,T) := \frac{1}{2} \lim_{k\rightarrow +\infty} \frac{\mu(\gamma,kT)}{kT}.
\] 
The limit exists due to the quasi-morphism property of the Conley-Zehnder index. The presence of the factor $1/2$ is due to the fact that the mean Conley-Zehnder index of a full rotation is 2. One easily checks that
\[
\overline{\rho}(\gamma,kT) = \overline{\rho}(\gamma,T) \qquad \forall k\in \N,
\]
and hence we may remove the period $T$ from the notation and write
\[
\overline{\rho}(\gamma) = \overline{\rho}(\gamma,T).
\]
One can also show that for a closed orbit $(\gamma,T)$ the implication 
\[
\overline{\rho}(\gamma) < 0 \quad \Rightarrow \quad \mu(\gamma,kT) <0 \qquad \forall k\in \N
\]
holds. Now we would like to consider the infimum and supremum of the mean rotation number of all closed orbits of $R_{\alpha}$. In order to obtain quantities which are invariant under the rescaling $\alpha \mapsto c\,\alpha$ we multiply the mean rotation numbers by $T_{\min}(\alpha)$ and define
\[
\begin{aligned}
s(\alpha): =T_{\min}(\alpha) \inf_{(\gamma,T) \in \mathcal{P}(\alpha)}\overline{\rho}(\gamma),\\  S(\alpha): = T_{\min}(\alpha)\sup_{(\gamma,T) \in \mathcal{P}(\alpha)} \overline{\rho}(\gamma).
\end{aligned}
\]
Finally, we define
\[
\Delta(\alpha): = S(\alpha)-s(\alpha).
\]
By the compactness of $S^3$, the ratio $\mu(\gamma,kT)/kT$ has uniform bounds for all closed orbits $(\gamma,T)$ and hence
\[
-\infty<s(\alpha) \leq S(\alpha)<+\infty,
\]
which implies
\[
0\leq \Delta(\alpha) <+\infty.
\]

As an example, if $\alpha=\alpha_0$ is the standard contact form on $S^3$ then all of its  Reeb orbits are covers of the Hopf fibers, which all have least period $\pi$. Hence $T_{\min}(\alpha_0)=\pi$. A simple computation shows that 
\[
\overline{\rho}(\gamma)= \frac{2}{\pi}\qquad \forall (\gamma,T) \in \mathcal{P}(\alpha_0).
\]
It follows that 
\[
s(\alpha_0) =S(\alpha_0)=2 \qquad \mbox{and} \qquad \Delta(\alpha_0)=0.
\]
Notice that in this case ${\rm vol}(S^3,\alpha_0\wedge d\alpha_0)=\pi^2$ and hence $\rho_{\rm sys}(\alpha_0) =1$. 

Our next result generalizes at the same time Theorem \ref{T:main_DC} and the second part of \cite[Theorem 2]{abhs17b}. It shows that one can find tight contact forms on $S^3$ with  systolic ratio arbitrarily close to any integer $n\geq 2$ in such a way that $S$ has the value 2 and $s$ is close to the integer $-(n-1)^2+2$, which is negative for $n\geq 3$ and tends to $-\infty$ quadratically.

\begin{thm}\label{thm0}
For  each natural number $n \geq 2$ and each $\epsilon>0$ there exists  a tight contact form $\alpha$ on $S^3$  whose systolic ratio satisfies
\[
n-\epsilon< \rho_{\mathrm{sys}}(\alpha) <n,
\]
such that the invariants $s(\alpha)$ and $S(\alpha)$ satisfy
\[
-(n-1)^2+ 2 < s(\alpha) < -(n-1)^2+ 2 + \epsilon \qquad \mbox{ and }\qquad  S(\alpha)=2,
\]
and which is dynamically convex for $n=2$. In particular,
\[
(n-1)^2 - \epsilon < \Delta(\alpha) < (n-1)^2,
\]
and $\alpha$ admits Reeb orbits with negative Conley-Zehnder indices for all $n\geq 3$.
\end{thm}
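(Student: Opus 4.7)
The plan is to follow the blueprint described after Theorem~\ref{T:main_DC} and already used for \cite[Theorem 2]{abhs17b}. One constructs a smooth area-preserving diffeomorphism $\varphi$ of the disk $\D$, compactly supported in the open disk, with carefully controlled Calabi invariant, action spectrum of periodic points, and Conley--Zehnder indices of their linearizations. Then $\varphi$ is realized as the first return map to a disk-like global surface of section for the Reeb flow of a contact form $\alpha$ on $S^3$. The resulting $\alpha$ is tight because the construction produces a contact form contact-isotopic to $\alpha_0$, its systolic ratio is read off from $\mathrm{CAL}(\varphi)$ and the return time function $\tau$ via the splitting $\mathrm{vol}(S^3,\alpha\wedge d\alpha) = \int_\D \tau\, d\lambda + \mathrm{CAL}(\varphi)$, and the Conley--Zehnder indices of closed Reeb orbits are obtained from those of the periodic points of $\varphi$ through a shift formula in the standard global trivialization of $\ker \alpha_0$ recalled in Section~\ref{Sec:fromTtoS}.

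Given $n \geq 2$ and $\epsilon>0$, I would design $\varphi$ so that: (i) the Calabi invariant is tuned so that $\mathrm{vol}(S^3,\alpha\wedge d\alpha)$ is close to $T_{\min}(\alpha)^2/n$ from above, yielding $\rho_{\rm sys}(\alpha)\in (n-\epsilon,n)$; (ii) a distinguished periodic point, responsible for the large systolic ratio, has linearized return map with mean rotation number such that the corresponding closed Reeb orbit satisfies $T_{\min}(\alpha)\,\overline{\rho} < -(n-1)^2+2+\epsilon$; (iii) the boundary of the section is a distinguished Reeb orbit whose mean rotation number realizes $T_{\min}(\alpha)\,\overline{\rho}=2$ exactly, providing $S(\alpha)=2$; (iv) every other periodic point of $\varphi$ has mean Conley--Zehnder index such that the induced mean rotation number of the corresponding Reeb orbit, rescaled by $T_{\min}(\alpha)$, lies in the closed interval $[-(n-1)^2+2,2]$. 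The lower bound $s(\alpha)>-(n-1)^2+2$ will then come from a uniform two-sided estimate on the mean Conley--Zehnder indices of all periodic points, analogous to the bounds $\mu(p)\geq -1$ and $\mu(p,k)\geq 1-3k$ proved in the course of Theorem~\ref{T:main_DC}. In the special case $n=2$, those same bounds translate through the shift formula into $\mu_{\rm Reeb}\geq 3$ for every closed orbit, which is precisely dynamical convexity.

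Concretely, I would build $\varphi$ as the time-one map of a compactly supported Hamiltonian isotopy obtained by concatenating a large negative monotone twist on an annulus (this is what drives the Calabi invariant up and the Conley--Zehnder indices down, producing the low value of $s(\alpha)$ for large $n$) with a small local perturbation near the origin that creates the minimizing periodic point fixing $T_{\min}(\alpha)$. The twist rotation number is chosen slightly irrational so as to avoid resonance-created long periodic orbits with uncontrolled indices, and the perturbation near the origin is taken in the class of \cite{abhs17b} so that its own contribution to the Calabi integral is negligible.

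The main technical obstacle is the simultaneous control of the Conley--Zehnder indices of \emph{all} iterates of \emph{all} periodic points of $\varphi$. Because the Conley--Zehnder index is only a quasimorphism, a one-sided bound on the index in a single iteration does not automatically propagate: elliptic, hyperbolic, and degenerate periodic points obey different iteration inequalities, and one must prevent rogue high-period orbits from overshooting the upper envelope $S(\alpha)=2$ or undershooting $s(\alpha)=-(n-1)^2+2$. This forces a delicate choice of the rotation profile of the annular twist, together with a careful perturbation argument ruling out unwanted near-rational resonances, and a matching analysis of the shift formula for the Conley--Zehnder index along closed Reeb orbits that meet the global surface of section multiple times.
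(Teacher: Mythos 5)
Your high-level scheme (special disk map $\to$ lift to a Reeb flow via a disk-like global surface of section $\to$ read off volume from $\mathrm{CAL}(\varphi)$ and indices from a shift formula) is the paper's scheme, and items (i)--(iv) of your wish list are the right targets. The gap is in the concrete construction: a ``large negative monotone twist on an annulus'' plus a perturbation near the origin cannot produce the required map. To get $\rho_{\rm sys}$ close to $n$ you need $\mathrm{CAL}(\varphi)\approx -\pi^2(1-\tfrac1n)$ \emph{and} $T_{\min}(\alpha)=\pi$, i.e.\ every fixed point of $\varphi$ must have action $\geq 0$ even though the action must be close to $-\pi(1-\tfrac1n)$ on a set of almost full measure. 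A radial/annular twist cannot reconcile these: if its rotation profile crosses an integer you get invariant circles of fixed points of $\varphi$ with very negative action, and if it does not, the elliptic point at the center of the twist is a fixed point of $\varphi$ with action $h(0)\approx -\pi(1-\tfrac1n)$; either way there is a closed Reeb orbit of period near $\pi/n$, $T_{\min}$ collapses, and the systolic ratio does not exceed $1$. Choosing the rotation number ``slightly irrational'' does not help, because the twist interpolates continuously down to zero rotation at the edge of its support, and a small local perturbation cannot move the action of the surviving fixed point.

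The missing idea is the decomposition $\varphi=\varphi_+\circ\varphi_-$ of Proposition \ref{mapn}: $\varphi_+$ is essentially the rotation by $2\pi/n$, and $\varphi_-$ is supported in $n$ disjoint disk-like sets $A_0,\rho(A_0),\dots,\rho^{n-1}(A_0)$ that $\varphi_+$ permutes cyclically, each carrying a conjugated copy of a radial twist by angle close to $-2\pi n$. Since $\varphi_-$ preserves each $A_j$ while $\varphi_+$ permutes them, no point of $A=\bigcup_j A_j$ is periodic with period $<n$; hence the points with action near $-\pi(1-\tfrac1n)$ contribute closed orbits of period $k(\pi+\sigma)\geq n\cdot\frac{\pi}{n}=\pi$, which is exactly what saves $T_{\min}(\alpha)=\pi$. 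This structure also disposes of your ``rogue high-period orbit'' worry without any genericity or resonance-avoidance argument: every periodic point lies either where $\varphi=\varphi_+$ or in $A$, where $\varphi^k=\varphi_+^k\circ\varphi_-^k$ is an explicit rotation composed with a conjugated radial map, so the mean Conley--Zehnder index is computed exactly for all iterates from Lemma \ref{radialham} and converted into the two-sided bounds of Proposition \ref{mapn} (vi)--(viii); the passage to $s(\alpha)$, $S(\alpha)$ and to dynamical convexity for $n=2$ then goes through the explicit shift $\mu((\gamma,T),\Xi^\alpha)=\mu(z_0,\varphi^k)+4k$ of Addendum \ref{adde2}, not merely ``a shift formula'' left unspecified.
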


The proof of the above result follows the same scheme of that of Theorem \ref{T:main_DC}, but requires a more general family of area-preserving diffeomorphisms of the disk, with precise bounds relating the mean Conley-Zehnder indices of periodic points to their action.

We do not know whether Theorems \ref{T:main_DC} and \ref{thm0} are sharp in the following sense:
\begin{enumerate}
\item[(i)] Is it true that $\rho_{\rm sys}(\alpha) < 2$ for all dynamically convex contact forms $\alpha$ on $S^3$?
\item[(ii)] Is it true that $s(\alpha)$ goes to $-\infty$ as $\rho_{\rm sys}(\alpha)$ tends to $+\infty$? 
\item[(iii)] More ambitously, is it true that if $\rho_{\rm sys}(\alpha)\geq  n$ for some $n\in \N$ then
\[
s(\alpha) \leq -(n-1)^2+2 \qquad \mbox{ and } \qquad \Delta(\alpha) \geq  (n-1)^2?
\]
\end{enumerate}
An affirmative answer to question (i) would imply that the supremum of the systolic ratio over all dynamically convex contact forms on $S^3$ is 2.
An affirmative answer to question (iii) would imply the existence of closed Reeb orbits with negative Conley-Zehnder index whenever $\rho_{\rm sys}(\alpha)\geq 3$.

The paper is organized in the following way. In Section \ref{secprima} we review the definitions and main properties of the action, Calabi invariant and Conley-Zehnder index and we construct the family of special  area-preserving diffeomorphisms of the disk. In Section \ref{secseconda} we discuss the issue of lifting area-preserving diffeomorphisms of the disk to Reeb flows on $S^3$, with special regard to the behaviour of the Conley-Zehnder index, and finally we prove Theorems \ref{T:main_DC} and \ref{thm0}.

\paragraph{\bf Acknowledgments.} We are grateful to Michael Hutchings for a suggestion which simplified a previous proof of Proposition \ref{map2} below.
The research of A.\ Abbondandolo and B.\ Bramham is supported by the SFB/TRR 191 ``Symplectic Structures in Geometry, Algebra and Dynamics'', funded by the Deutsche Forschungsgemeinschaft. P.\ A.\ S.\ Salom\~{a}o is supported by the FAPESP grant 2011/16265-8 and the CNPq grant 306106/2016-7.

\section{A family of special area-preserving diffeomorphisms}
\label{secprima}

Throughout this article, $\omega_0$ denotes the standard area form on $\R^2$ and $\lambda_0$ its standard primitive:
\begin{equation}
\label{omegalambda}
\omega_0:=dx\wedge dy \qquad\mbox{and}\qquad \lambda_0 := \frac{1}{2} ( x\, dy - y \, dx).
\end{equation}
Note that $\lambda_0$ is invariant under rotations about the origin.  We shall often tacitly identity $\R^2$ with $\C$ by the standard identification $(x,y)\mapsto x+iy$.

\subsection{Action and Calabi invariant}

Here we recall the definition and basic facts about the action and the Calabi invariant of compactly supported area-preserving diffeomorphisms of the plane. See e.g.\ \cite[section 2.1]{abhs17b} for detailed proofs.

We denote by $\mathrm{Diff}(\R^2,\omega_0)$ the group of smooth diffeomorphisms of $\R^2$ which preserve $\omega_0$ and by $\mathrm{Diff_c}(\R^2,\omega_0)$ the subgroup consisting of compactly supported diffeomorphisms. Let $\varphi\in \mathrm{Diff_c}(\R^2,\omega_0)$ and let $\lambda$ be a smooth primitive of $\omega_0$ on $\R^2$. Since $\varphi$ preserves $\omega_0$,  the 1-form $\varphi^* \lambda - \lambda$
is closed and hence exact on $\R^2$.   The {\em action} of $\varphi$ with respect to $\lambda$ is the unique smooth function
\[
\sigma_{\varphi,\lambda} : \R^2 \rightarrow \R
\]
which is compactly supported and satifies
\[
d\sigma_{\varphi,\lambda} = \varphi^* \lambda - \lambda.
\]
Notice that $\sigma_{\varphi,\lambda}$ vanishes on any connected unbounded domain in $\R^2$ which is disjoint from the support of $\varphi$.
The next lemma describes the dependence of the action on its defining data.

\begin{lem}
\label{formule}
Let $\varphi$ and $\psi$ be elements of $\mathrm{Diff_c}(\R^2,\omega_0)$ and let $h$ be in $\mathrm{Diff}(\R^2,\omega_0)$. Let $\lambda$ be a smooth primitive of $\omega_0$ and let $u$ be a smooth real function on $\R^2$. Then:
\begin{enumerate}[(i)]
\item $\sigma_{\varphi,\lambda+du} = \sigma_{\varphi,\lambda} + u\circ \varphi - u$.
\item $\sigma_{\psi\circ \varphi,\lambda} = \sigma_{\psi,\lambda} \circ \varphi + \sigma_{\varphi, \lambda} = \sigma_{\psi,\lambda} + \sigma_{\varphi,\psi^* \lambda}$.
\item $\sigma_{\varphi^{-1},\lambda} = - \sigma_{\varphi,\lambda} \circ \varphi^{-1} = - \sigma_{\varphi,(\varphi^{-1})^*\lambda}$;
\item $\sigma_{h^{-1}\circ \varphi \circ h,h^* \lambda} = \sigma_{\varphi,\lambda}\circ h$. 
\end{enumerate}
\end{lem}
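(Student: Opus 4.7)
The plan is to verify each identity by appealing to the uniqueness part of the definition of $\sigma_{\varphi,\lambda}$: any compactly supported smooth function on $\R^2$ is determined by its exterior derivative, because $\R^2$ is connected and a compactly supported function must vanish at infinity. So for each of the four formulas, I would (a) check that the proposed expression is compactly supported, and (b) verify that its differential matches the defining equation of the left-hand side.

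For (i), I would compute directly
\[
d(\sigma_{\varphi,\lambda} + u\circ\varphi - u) = (\varphi^*\lambda - \lambda) + \varphi^*(du) - du = \varphi^*(\lambda + du) - (\lambda + du),
\]
and observe that $u\circ\varphi - u$ is supported in the support of $\varphi$, since $\varphi$ is the identity outside a compact set. For (ii), the same naturality calculation gives
\[
d(\sigma_{\psi,\lambda}\circ\varphi + \sigma_{\varphi,\lambda}) = \varphi^*(\psi^*\lambda - \lambda) + (\varphi^*\lambda - \lambda) = (\psi\circ\varphi)^*\lambda - \lambda,
\]
and an analogous computation handles the second expression $\sigma_{\psi,\lambda} + \sigma_{\varphi,\psi^*\lambda}$; both are compactly supported because $\psi\circ\varphi$ is. Then (iii) drops out of (ii) upon setting $\psi = \varphi^{-1}$ and using $\sigma_{\id,\lambda} = 0$: the two expressions in (ii) both vanish, so
\[
0 = \sigma_{\varphi^{-1},\lambda}\circ\varphi + \sigma_{\varphi,\lambda} = \sigma_{\varphi^{-1},\lambda} + \sigma_{\varphi,(\varphi^{-1})^*\lambda},
\]
and solving for $\sigma_{\varphi^{-1},\lambda}$ yields both claimed expressions. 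Finally, for (iv), I would compute
\[
d(\sigma_{\varphi,\lambda}\circ h) = h^*(\varphi^*\lambda - \lambda) = (h^{-1}\circ\varphi\circ h)^*(h^*\lambda) - h^*\lambda,
\]
using $h^*\varphi^* = (\varphi\circ h)^* = (h\circ h^{-1}\circ\varphi\circ h)^* = (h^{-1}\circ\varphi\circ h)^*\circ h^*$, and note that $\sigma_{\varphi,\lambda}\circ h$ is supported in $h^{-1}(\supp\varphi) = \supp(h^{-1}\circ\varphi\circ h)$.

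The proof is almost a bookkeeping exercise, so there is no real obstacle; the only small subtlety is the verification that each candidate function is compactly supported, which in (i) requires the observation that $\varphi$ is the identity outside a compact set and in (iv) that conjugation by $h$ preserves compactness of support even though $h$ itself need not be compactly supported. Once compact support is established, the uniqueness statement in the definition of the action does the rest of the work for all four identities.
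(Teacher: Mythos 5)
Your proof is correct and is essentially the standard argument: the paper itself does not prove this lemma but defers to \cite[Section 2.1]{abhs17b}, and the route you take --- uniqueness of a compactly supported function on the connected plane with prescribed differential, combined with naturality of pullback --- is exactly the intended one. The two points you flag (compact support of $u\circ\varphi-u$ in (i), and of $\sigma_{\varphi,\lambda}\circ h$ in (iv)) are indeed the only places where anything needs to be said, and your treatment of them is fine; implicitly you also use that $\psi^*\lambda$ and $h^*\lambda$ are again primitives of $\omega_0$, which holds because $\psi$ and $h$ preserve $\omega_0$.
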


The function $\sigma_{\varphi,\lambda}$ depends on the choice of the primitive $\lambda$, but its value at fixed points of $\varphi$ does not, by Lemma \ref{formule} (i). By the same statement, also the integral of $\sigma_{\varphi,\lambda}$ is independent on the choice of the primitive $\lambda$ and defines the quantity
\[
\mathrm{CAL}(\varphi) := \int_{\R^2} \sigma_{\varphi,\lambda} \omega_0,
\]
which is called the {\em Calabi invariant} of $\varphi$. The function
\[
\mathrm{CAL}: \mathrm{Diff_c}(\R^2,\omega_0) \rightarrow \R
\]
is a surjective homomorphism of groups.

The group $\mathrm{Diff_c}(\R^2,\omega_0)$ is connected, so for every $\varphi\in \mathrm{Diff_c}(\R^2,\omega_0)$ one can find a smooth isotopy $\{\varphi^t\}_{t\in [0,1]} \subset \mathrm{Diff_c}(\R^2,\omega_0)$ such that $\varphi^0=\mathrm{id}$ and $\varphi^1=\varphi$. The fact that $\R^2$ is simply connected implies that the time dependent vector field generated by this isotopy, which is defined by
\[
X_t\bigl(\varphi^t(z)\bigr) = \frac{d}{dt} \varphi^t(z),
\]
is Hamiltonian, meaning that there exists a compactly supported smooth function 
\[
H: [0,1]\times \R^2 \rightarrow \R, \qquad H(t,z) = H_t(z),
\]
such that
\begin{equation}
\label{hvf}
\imath_{X_t} \omega_0 = dH_t.
\end{equation}
The function $H$ is called a {\em generating Hamiltonian} for $\varphi$. Conversely, a smooth compactly supported function 
\[
H: [0,1]\times \R^2 \rightarrow \R, \qquad H(t,z) = H_t(z),
\]
defines a smooth compactly supported time dependent vector field $X_{H_t}$ by the identity
\eqref{hvf}, whose flow is a smooth path in $\mathrm{Diff_c}(\R^2,\omega_0)$ starting at the identity. 

The action and the Calabi invariant of $\varphi$ can be expressed in terms of a generating Hamiltonian $H$ by the formulas
\[
\sigma_{\varphi,\lambda}(z) = \int_{t\mapsto \varphi^t(z)} \lambda + \int_0^1 H_t\bigl( \varphi^t(z) \bigr)\, dt,
\]
where $\varphi^t$ is the flow of $X_{H_t}$, and
\begin{equation}
\label{calham}
\mathrm{CAL}(\varphi) = 2\int_{[0,1]\times \R^2}  H \, dt\wedge \omega_0.
\end{equation}

\subsection{Conley-Zehnder index}\label{ssection_CZ_index}

In this subsection, we recall the definition of the Conley-Zehnder index for paths of symplectic linear automorphisms of $(\R^2,\omega_0)$ starting at the identity. We need to consider also the case in which the symplectic path is degenerate, meaning that 1 is an eigenvalue of the automorphism at the right-end point of the path. In this case, we consider the maximal lower semi-continuous extension of the Conley-Zehnder index for non-degenerate paths. This definition agrees with the one in \cite[Section 3]{hwz98}, which is the main reference for this subsection, and differs from the one in \cite{rs95}.

For every closed interval $I\subset \R$ of length strictly less than $1$ satisfying $\partial I \cap \Z = \emptyset$ define
\[
\mu(I) := \left\{ \begin{aligned} & 2k && \text{if }  I \cap \Z = \{k\} \\ & 2k+1 && \text{if} \ I \subset (k,k+1) \mbox{ for some }k\in \Z \end{aligned} \right.
\]
One extends the function $\mu$ to the set of all closed intervals $I$ of length strictly less than $1$ by the formula
\[
\mu(I) := \lim_{\delta\downarrow 0} \mu(I-\delta).
\]
Notice that in the special case in which the interval $I$ consists of only one point $a$ we have
\begin{equation}
\label{singleton}
\mu(\{a\}) = 2 \lceil a \rceil -1,
\end{equation}
where $\lceil a\rceil$ denotes the unique integer $k$ for which $a\in(k-1,k]$.

Now let $\Phi:[0,1] \mapsto {\rm Sp}(2)$ be a continuous symplectic path satisfying $\Phi(0)=\rm{id}$. For every non-zero vector $u$ in the plane choose a continuous lift $\theta_u:[0,1]\mapsto \R$ of the argument of $\Phi(t)u$. This means that
\[
\Phi(t)u = |\Phi(t)u|e^{i\theta_u(t)}.
\]
Define the number $\Delta_{\Phi}(u)$ by
\[
\Delta_{\Phi}(u) := \frac{\theta_u(1)-\theta_u(0)}{2\pi},
\]
which is independent of the choice of the lift $\theta_u$. The image $J_{\Phi}$ of the function $\Delta_{\Phi}$ is the so-called {\it rotation interval} of $\Phi$. It is an interval of length less than~$1/2$. One can show that $\partial J_{\Phi} \cap \Z = \emptyset$ if and only if $1$ is not an eigenvalue of $\Phi(1)$. We define the {\em Conley-Zehnder index} $\mu(\Phi)$ of $\Phi$ as
\[
\mu(\Phi) := \mu(J_{\Phi}).
\]
Here are two useful properties of the Conley-Zehnder index. The first one is its naturality: For any $A\in \mathrm{Sp}(2)$ we have
\begin{equation}
\label{cz1}
\mu(A^{-1} \Phi A) = \mu (\Phi).
\end{equation}
For the second one we recall that the space of free homotopy classes $[\R/\Z,\mathrm{Sp}(2)]$ is isomorphic to $\Z$ by an isomorphism called the {\em Maslov index}, which we denote by
\[
\mathrm{Maslov}: [\R/\Z,\mathrm{Sp}(2)] \rightarrow \Z,
\]
and is normalized by giving the loop of positive rotations $t\mapsto e^{2\pi it}$, $t\in \R/\Z$, the Maslov index 1. Then for any loop $\Psi:[0,1] \rightarrow \mathrm{Sp}(2)$ with $\Psi(0)=\Psi(1)=\mathrm{id}$ we have
\begin{equation}
\label{cz2}
\mu(\Psi \Phi) = \mu(\Phi) + 2 \, \mathrm{Maslov}(\Psi).
\end{equation}

\paragraph{Conley-Zehnder index of fixed points.}
Let $\varphi \in{\rm Diff_c}(\R^2,\omega_0)$ and $z_0$ be a fixed point of $\varphi$. Choose a smooth path $\{\varphi^t\}_{t\in [0,1]}\subset {\rm Diff_c}(\R^2,\omega_0)$ such that $\varphi^0=\mathrm{id}$ and $\varphi^1=\varphi$. 
The {\em Conley-Zehnder index} of $z_0$ with respect to $\varphi$ is defined to be the Conley-Zehnder index of the symplectic path
\[
[0,1] \mapsto {\rm Sp}(2), \qquad t\mapsto D\varphi^t(z_0),
\]
and is denoted by
\[
\mu(z_0,\varphi).
\]
As the notation suggests, this integer is independent of the choice of the isotopy in ${\rm Diff_c}(\R^2,\omega_0)$ which connects the identity to $\varphi$. Moreover, if $h$ is in ${\rm Diff}(\R^2,\omega_0)$ and $z_0$ is a fixed point of $\varphi$, then
\[
\mu(h(z_0),h\circ \varphi\circ h^{-1}) = \mu(z_0,\varphi).
\]
Indeed, this follows from \eqref{cz1}, \eqref{cz2} and from the fact that $\R^2$ is simply connected.

Being a fixed point of $\varphi$, $z_0$ is also a fixed point of all the iterates $\varphi^k$, and the {\em mean Conley-Zehnder index} of $z_0$ is defined as
\[
\overline{\mu}(z_0,\varphi) := \lim_{k\rightarrow +\infty} \frac{\mu(z_0,\varphi^k)}{k}.
\]
The existence of the above limit follows from the fact that the map
\[
\N \rightarrow \Z, \qquad k\mapsto \mu(z_0,\varphi^k)
\]
is a quasi-morphism.

\paragraph{Conley-Zehnder index and rotation number of closed Reeb orbits.}
Let $\beta$ be a contact form on a 3-manifold $M$, with Reeb  vector field  $R_\beta$ and Reeb flow $\phi_{R_\beta}^t$.

Given a closed Reeb orbit $(\gamma,T)$, let $\Xi$ be a $d\beta$-symplectic trivialization of the contact structure $\ker \beta$ over some neighborhood of the image of $\gamma$. We may see $\Xi$ as a smooth family of symplectic linear isomorphisms
\[
\Xi_q :(\R^2,\omega_0) \to  (\ker \beta(q),d\beta)
\]
parametrized by points $q$ near $\gamma(\R)$. A path $\Phi: [0,1] \to {\rm Sp}(2n)$ can be constructed by the formula
\[
\Phi(t) = \Xi_{\phi^{Tt}_{R_\beta}(p)}^{-1} \circ d\phi^{Tt}_{R_\beta}(p) \circ \Xi_p,
\]
where $p=\gamma(t_0)$ with $t_0\in \R$ chosen arbitrarily. The {\em Conley-Zehnder index} of $(\gamma,T)$ with respect to $\Xi$ is defined as the Conley-Zehnder index $\mu(\Phi)$ of the path $\Phi$ as defined in the beginning of section~\ref{ssection_CZ_index}, and is denoted by 
\[
\mu((\gamma,T),\Xi).
\]
As the notation suggest, this does not depend on the choice of the base point $p\in\gamma(\R)$.

If $\tilde{\Xi}$ is a different $d\beta$-symplectic trivialization of the contact structure $\ker \beta$ over some neighborhood of $\gamma(\R)$ and $\tilde{\Phi}:[0,1] \rightarrow \mathrm{Sp}(2)$ is the corresponding trivialization of the path $t\mapsto d\phi^{Tt}_{R_\beta}(p)$, we have
\[
\tilde{\Phi}(t) = \Psi(t) \Phi(t) \Psi(0)^{-1},
\]
where $\Psi: \R/\Z \rightarrow \mathrm{Sp}(2)$ is the loop
\[
\Psi(t) := \tilde{\Xi}_{\phi^{Tt}_{R_\beta}(p)}^{-1} \circ \Xi_{\phi^{Tt}_{R_\beta}(p)}.
\]
By (\ref{cz1}) and (\ref{cz2}) we get
\begin{equation}
\label{chtriv}
\mu\bigl((\gamma,T),\tilde{\Xi}\bigr) = \mu\bigl((\gamma,T),\Xi\bigr) + 2 \, \mathrm{Maslov}(\Psi).
\end{equation}
When the trivializations $\tilde{\Xi}$ and $\Xi$ are isotopic, the loop $\Psi$ is freely homotopic to a constant loop, and hence the Conley-Zehnder indices with respect to $\tilde{\Xi}$ and $\Xi$ coincide.

The {\em mean rotation number} of $(\gamma,T)$ with respect to $\Xi$ is defined as
\[
\overline{\rho}((\gamma,T),\Xi) := \frac{1}{2} \lim_{k \to \infty} \frac{\mu((\gamma,kT),\Xi)}{kT}.
\]
As in the case of the mean Conley-Zehnder index of a fixed point, the existence of the above limit follows from the fact that the map
\[
\N \rightarrow \Z, \qquad k\mapsto \mu((\gamma,kT),\Xi)
\]
is a quasi-morphism. The fact that
\[
\overline{\rho}((\gamma,T),\Xi) = \overline{\rho}((\gamma,kT),\Xi)
\]
allows us to denote the mean rotation number simply by
\[
\overline{\rho}(\gamma,\Xi) = \overline{\rho}((\gamma,T),\Xi).
\]
The trivialization $\Xi$ will be omitted from the notation for $\mu$ or $\overline{\rho}$ when it is clear from the context.

\subsection{Rotationally invariant Hamiltonians}

Rotationally invariant Hamiltonians will be useful building blocks in our construction.   In this section we compute the action, the Calabi invariant, and the Conley-Zehnder indices associated to a general autonomous radial Hamiltonian with compact support, with respect to the rotationally invariant primitive $\lambda_0$ of $\omega_0$

\begin{lem}\label{radialham}
Suppose that $H:\R^2 \rightarrow\R$ has the form $H(z)=h\big(|z|^2\big)$ for some smooth function $h:[0,+\infty) \rightarrow\R$ with compact support.
Let $\varphi^t$ be the flow of the autonomous Hamiltonian vector field $X_H$ and $\varphi = \varphi^1$ be the corresponding time-$1$ map.
Then 
\begin{equation}\label{E:radham_isotopy}
\varphi^t(z) = e^{-2 h'(|z|^2)it} z, \qquad \forall z\in \R^2, \; \forall t\in \R,
\end{equation}
and
\begin{enumerate}[(i)]
 \item $\sigma_{\varphi,\lambda_0}(z) = h(|z|^2) - |z|^2 h'(|z|^2))$ for all $z\in \R^2$.
\item $\displaystyle{\mathrm{CAL}(\varphi)= 4\pi \int_0^{+\infty} r h(r^2)\, dr}$.
 \item Let $z_0\in\Fix(\varphi^k)$, for some $k\in\N$. If $z_0\neq0$ then $kh'(|z_0|^2)/\pi \in\Z$. Moreover, the Conley-Zehnder index satisfies
\[
 \mu(z_0,\varphi^k)  =  \left\{ \begin{array}{ll} \displaystyle{- \frac{2kh'(|z_0|^2)}{\pi}}, & \mbox{ if } z_0\neq 0 \mbox{ and } h''(|z_0|^2)< 0,\\ & \\
 \displaystyle{-\frac{2kh'(|z_0|^2)}{\pi}-1}, & \mbox{ if } z_0\neq 0 \mbox{ and } h''(|z_0|^2)\geq 0,\\ & \\
\displaystyle{2\left\lceil - \frac{kh'(0)}{\pi}\right\rceil-1}, & \mbox{ if } z_0= 0.
\end{array} \right.
\]
\end{enumerate}
\end{lem}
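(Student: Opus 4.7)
The plan is to first derive the flow formula \eqref{E:radham_isotopy} by a direct computation of the Hamiltonian vector field. In complex notation, $\imath_{X_H}\omega_0 = dH = 2h'(|z|^2)(x\,dx + y\,dy)$ yields $X_H(z) = -2ih'(|z|^2)z$. Since $|z|$ is preserved by the flow of a radial Hamiltonian (equivalently, $h'(|z|^2)$ is constant along trajectories), the ODE $\dot z = -2ih'(|z|^2)z$ integrates to $z(t) = e^{-2ih'(|z_0|^2)t}z_0$.

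For part (i), I would apply the action formula $\sigma_{\varphi,\lambda_0}(z)=\int_{t\mapsto \varphi^t(z)} \lambda_0 + \int_0^1 H\bigl(\varphi^t(z)\bigr)dt$. The second term equals $h(|z|^2)$ since $H$ is constant along its flow. For the first, the orbit $t\mapsto\varphi^t(z)$ is a circular arc at radius $|z|$, and in polar coordinates $\lambda_0 = \tfrac12 r^2 d\theta$, with $\theta$ changing by $-2h'(|z|^2)$, giving $-|z|^2 h'(|z|^2)$. Part (ii) follows immediately from the identity $\mathrm{CAL}(\varphi)=2\int_{\R^2}H\,\omega_0$ applied in polar coordinates; alternatively one can integrate the explicit expression from (i) and verify agreement via integration by parts, using that $h$ has compact support.

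For part (iii), the strategy is to use the isotopy $t\mapsto \varphi^{kt}$ from the identity to $\varphi^k$, which is allowed because the Conley-Zehnder index is isotopy-invariant. Linearizing $\varphi^{kt}(z)=e^{-2ikth'(|z|^2)}z$ at $z_0$, with $z_0 = re^{i\theta_0}$, and writing tangent vectors in the rotated frame $w=e^{i\theta_0}u$ with $u=x+iy$, one obtains the path
\[
\Phi_k(t) = R_{-2ktA}\begin{pmatrix} 1 & 0 \\ -4ktB & 1 \end{pmatrix},\qquad A:=h'(|z_0|^2),\ B:=h''(|z_0|^2)\,|z_0|^2,
\]
where $R_\vartheta$ denotes rotation by $\vartheta$. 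Conjugation by $e^{i\theta_0}$ does not change the Conley-Zehnder index by \eqref{cz1}. The fixed point condition $\varphi^k(z_0)=z_0$ with $z_0\neq 0$ forces $e^{-2ikA}=1$, i.e., $kA/\pi\in\Z$, giving the first assertion; write $N=kA/\pi\in\Z$.

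The key computation is the rotation interval $J_{\Phi_k}$. Since the rotation factor rotates every vector uniformly by $-2ktA = -2Nt\pi$ and the shear factor contributes independently, $\Delta_{\Phi_k}(u) = -N + \Delta_S(u)$ where $\Delta_S$ is the rotation of the shear $S_t:=\bigl(\begin{smallmatrix}1&0\\-4ktB&1\end{smallmatrix}\bigr)$ alone. A direct analysis of $S_t(\cos\phi,\sin\phi)=(\cos\phi,\sin\phi-4ktB\cos\phi)$ shows that $\Delta_S$ is identically zero on the fixed directions $(0,\pm 1)$ and, writing $c=-4kB$, equals $\bigl(\arctan(c+\tan\phi)-\phi\bigr)/(2\pi)$ on the rest of the circle, with extremum $\pm\arctan(|c|/2)/\pi \in (-\tfrac12,\tfrac12)$ attained at $\phi=\mp\arctan(c/2)$. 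Hence:
\begin{itemize}
\item if $h''(|z_0|^2)<0$, then $c>0$ and $J_{\Phi_k}=[-N,\,-N+\arctan(c/2)/\pi]$, a closed interval with integer left endpoint; the degenerate limit $\mu(I)=\lim_{\delta\downarrow 0}\mu(I-\delta)$ puts $-N$ in the interior of $I-\delta$, yielding $\mu=-2N=-2kA/\pi$;
\item if $h''(|z_0|^2)>0$, then $c<0$ and $J_{\Phi_k}=[-N-\arctan(|c|/2)/\pi,\,-N]$; the shifted interval lies in $(-N-1,-N)$, so $\mu=-2N-1$;
\item if $h''(|z_0|^2)=0$, then $S_t=I$ and $J_{\Phi_k}=\{-N\}$, giving $\mu=2\lceil -N\rceil-1=-2N-1$ by \eqref{singleton}.
\end{itemize}
For the case $z_0=0$, the shear entirely vanishes and $\Phi_k(t)=R_{-2kth'(0)}$ is a pure rotation; every vector rotates by $-kh'(0)/\pi$, so $J_{\Phi_k}=\{-kh'(0)/\pi\}$ and \eqref{singleton} yields the stated formula.

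The main technical obstacle is the careful identification of $J_S$ (and in particular its sign, length, and whether the endpoints are attained) together with the degenerate limiting convention for $\mu$, since the fixed-point condition always makes $1$ an eigenvalue of $\Phi_k(1)$. Once this is handled, the three cases of sign of $h''(|z_0|^2)$ unify cleanly through the definition of $\mu$ on intervals.
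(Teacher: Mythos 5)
Your proposal is correct and follows essentially the same route as the paper: parts (i)--(ii) are the standard computations (which the paper simply cites from its predecessor), and for (iii) both arguments linearize the flow at $z_0$ and read off the rotation interval of the resulting rotation-composed-with-shear path, the only difference being that you compute the full rotation interval explicitly while the paper just tests the vectors $iz_0$ and $z_0$ to locate its integer endpoint and determine on which side of that integer the interval lies. Your case analysis, including the degenerate limiting convention for $\mu$ on intervals with integer endpoints, matches the paper's conclusions in all three cases.
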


\begin{proof}
See \cite[Lemma 2.9]{abhs17b} for the proof of (\ref{E:radham_isotopy}), (i) and (ii). Here we prove (iii). By differentiating~\eqref{E:radham_isotopy} we get the following expression for the linearized flow
\begin{equation}\label{explicit_linear_flow}
D\varphi^t(z_0)u = -4h''(|z_0|^2) \left<z_0,u\right> it \ e^{-2h'(|z_0|^2)it}z_0 + e^{-2h'(|z_0|^2)it}u
\end{equation}
where $\langle \cdot,\cdot \rangle$ denotes the Euclidean inner product on $\R^2$. Moreover, (\ref{E:radham_isotopy}) implies that $z_0\in \R^2 \setminus \{0\}$ is a fixed point of $\varphi^k$ if and only if
\[
h'\big(|z_0|^2\big)=-\frac{\pi m}{k}
\]
for some $m\in\Z$. In this case, plugging $u=iz_0$ into (\ref{explicit_linear_flow}) we get
\[
D\varphi^t(z_0)iz_0 = e^{-2h'(|z_0|^2)it}iz_0 = e^{2\pi mt/k} i z_0.
\]
Hence the integer $m$ belongs to the rotation interval of the path 
\begin{equation}
\label{path}
t\in[0,1] \mapsto D\varphi^{kt}(z_0) \in {\rm Sp}(2).
\end{equation}
Note also that since $D\varphi^k(z_0)iz_0=iz_0$, $z_0$ is a degenerate fixed point of $\varphi^k$ and hence $m$ is an endpoint of its rotation interval. Plugging $u=z_0$ and $t=k$ in \eqref{explicit_linear_flow} we get
\[
D\varphi^k(z_0)z_0 = -4h''(|z_0|^2)|z_0|^2 k iz_0 + z_0
\]
Hence the rotation interval of the path (\ref{path}), contains points in $(m,m+1)$ if $h''(|z_0|^2)<0$. In this case, we have 
\[
\mu(z_0,\varphi^k)=2m=-\frac{2kh'(|z_0|^2)}{\pi}.
\]
If $h''(|z_0|^2)\leq 0$ then the rotation interval is contained in $(m-1,m]$ and hence 
\[
\mu(z_0,\varphi^k)=2(m-1)+1=2m-1 =-\frac{2kh'(|z_0|^2)}{\pi}-1.
\]

If $z_0=0$ then from~\eqref{explicit_linear_flow} we get $D\varphi^t(z_0) = e^{-2h'(0)it}$. Hence the rotation interval of the path (\ref{path}) degenerates to the point $\frac{-kh'(0)}{\pi}$. In this case, we get from (\ref{singleton}) that 
\[
\mu(z_0,\varphi^k) = 2\left \lceil - \frac{kh'(0)}{\pi}\right \rceil-1,
\]  
as desired. \end{proof}

\subsection{Construction of a family of disk maps}

We denote by
\[
\mathbb{D}:= \big\{ z\in \R^2 \mid |z|\leq 1\big\}
\]
the closed unit disk and by ${\rm Diff_c}(\D,\omega_0)$ the subgroup of ${\rm Diff_c}(\R^2,\omega_0)$ consisting of diffeomorphisms with support in the interior of $\D$.

The proof of Theorem \ref{T:main_DC} will be based on the construction of a map $\varphi\in  {\rm Diff_c}(\D,\omega_0)$ with special properties, as described in the next proposition:

\begin{prop}
\label{map2}
For every $\epsilon>0$ there exists a smooth primitive $\lambda$ of $\omega_0$ such that $\lambda-\lambda_0$ is supported in the interior of $\D$ and a map $\varphi \in {\rm Diff_c}(\D,\omega_0)$
with the following properties: 
\begin{enumerate}[(i)]
\item The action $\sigma_{\varphi,\lambda}$ has the bounds
\[
-\frac{\pi}{2} < \sigma_{\varphi,\lambda}< \frac{\pi}{2},  
\]
with a stronger inequality at fixed points of $\varphi$: 
\[
\sigma_{\varphi,\lambda}(z_0)\geq 0 \qquad \forall z_0\in\Fix(\varphi).
\]
\item The Calabi invariant of $\varphi$ satisfies 
\[
-\frac{\pi^2}{2} < \Cal(\varphi)< -\frac{\pi^2}{2}+\epsilon.
\]
\item For each $k\in \N$ and each fixed point $z_0$ of $\varphi^k$ the Conley-Zehnder index of $z_0$ satisfies 
\[
\mu(z_0,\varphi^k)\geq  1-3k,
\]
with a stronger inequality if $z_0$ is a fixed point of $\varphi$:
\[
\mu(z_0,\varphi^k)\geq -1 \qquad \forall z_0\in \Fix(\varphi).
\]
\end{enumerate}
\end{prop}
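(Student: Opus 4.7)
My plan is to construct $\varphi$ as a composition $\varphi_0\circ\psi$, where $\varphi_0$ is a Hamiltonian diffeomorphism approximating the rigid rotation $z\mapsto -z$ on $\D$ and $\psi$ is a small non-radial Hamiltonian perturbation breaking the degenerate fixed circles of $\varphi_0$, followed by a change of primitive to enforce the global action bound.

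For $\varphi_0$, I would apply Lemma~\ref{radialham} to a smooth radial profile $h:[0,\infty)\to\R$ compactly supported in $[0,1)$ with $h'(s)=-\pi/2$ on a large central interval $[0,s_0]$ with $s_0$ close to $1$, transitioning smoothly back to $h\equiv 0$ on a thin annulus $[s_0,1)$. By Lemma~\ref{radialham}(ii), $\Cal(\varphi_0)=4\pi\int_0^\infty rh(r^2)\,dr$ tends to $-\pi^2/2$ as $s_0\to 1^-$, and the map $\varphi_0$ coincides with $z\mapsto -z$ on $\D_{\sqrt{s_0}}$ and is the identity near $\partial\D$. Lemma~\ref{radialham}(iii) then computes the Conley--Zehnder indices of fixed points of $\varphi_0^k$: the origin gives $\mu(0,\varphi_0^k)=2\lceil k/2\rceil-1\geq 1$, and at non-origin $k$-periodic points $\mu=2m$ or $2m-1$ where $m=-kh'(|z_0|^2)/\pi$, which I would use to check the bounds (iii).

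The central difficulty is that any compactly supported radial $h$ with $\int h<0$ necessarily has critical values $h(s_\ast)<0$ (otherwise, between consecutive critical points $h$ is monotone and hence bounded from below by the smaller endpoint value, forcing $h\geq 0$ everywhere and contradicting $\int h<0$). Consequently $\varphi_0$ carries degenerate fixed circles with $\sigma_{\varphi_0,\lambda_0}=h(s_\ast)<0$, violating condition~(i). I would then compose with $\psi=\varphi^1_{\delta H_1}$, where $H_1(z)=\eta(|z|^2)\cos(2\arg z)$ is supported in small neighborhoods of each offending critical circle and has zero angular mean. By the first-order variation formula for actions of Hamiltonian fixed points, each bad circle is broken into two isolated fixed points of $\varphi=\varphi_0\circ\psi$ near the angular extrema of $H_1$, with actions close to $h(s_\ast)+\delta H_1(z_\ast)$; choosing $\delta\eta(s_\ast)>|h(s_\ast)|$ raises both above zero. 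The zero-mean property makes $\Cal(\psi)$ negligible, so $\Cal(\varphi)$ stays in $(-\pi^2/2,-\pi^2/2+\epsilon)$, and the linearization of $\varphi$ at the new fixed points can be read off from that of $\varphi_0$ composed with $D\psi$, giving the required CZ bounds.

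Finally, should $\sigma_{\varphi,\lambda_0}$ exit the window $(-\pi/2,\pi/2)$ in the transition annulus, I would use Lemma~\ref{formule}(i) to replace $\lambda_0$ by $\lambda=\lambda_0+du$ for a suitable $u\in C^\infty_c(\mathrm{int}(\D))$; the coboundary $u\circ\varphi-u$ redistributes action values inside the disk, and since it vanishes at fixed points the property $\sigma_{\varphi,\lambda}(z_0)\geq 0$ at $\Fix(\varphi)$ is preserved. The principal obstacle in this plan is the simultaneous optimization in the second step: as $\epsilon\to 0^+$, $s_0\to 1^-$ forces the transition annulus to shrink, the range of $h'$ there to grow, and $|h(s_\ast)|$ to approach $\pi/2$, so the perturbation amplitude $\delta$ must be coordinated delicately with the profile of $h$ to achieve nonnegative actions at the new isolated fixed points, preserve the Calabi window, and keep all Conley--Zehnder indices within the bounds $\mu\geq-1$ at fixed points of $\varphi$ and $\mu\geq 1-3k$ at fixed points of $\varphi^k$.
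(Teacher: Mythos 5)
There is a genuine gap, and it sits exactly where you flag "the principal obstacle": the transition annulus of your radial profile cannot be repaired by the devices you propose. Since $h(0)\approx 0$, $h'\equiv-\pi/2$ on $[0,s_0]$ and $h$ is supported in $[0,1)$, the profile must climb back up by about $\pi s_0/2$ over the annulus $s\in[s_0,1)$, so $h'$ attains values $M\geq \pi s_0/(2(1-s_0))$ there, and $M\to\infty$ as $s_0\to 1$ (which the Calabi constraint forces). Now $h'$ sweeps continuously through all of $[-\pi/2,M]$, so for every rational value $-\pi m/k\in(0,M]$ there is a circle of $k$-periodic points of $\varphi_0$ in the annulus; by Lemma \ref{radialham}(iii) these have $\mu(z_0,\varphi_0^k)\leq -2kh'/\pi$, which violates $\mu\geq 1-3k$ as soon as $h'>3\pi/2$, and does so by a margin of order $k/(1-s_0)$. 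These circles are nowhere near the critical circles of $h$ (where $h'=0$), so your perturbation $\psi$ does not touch them, and no perturbation of the contemplated size can shift indices that are off by an unbounded amount. The same circles kill condition (i): there $\sigma_{\varphi_0,\lambda_0}=h-sh'\leq -s_0M$, which is far below $-\pi/2$ (indeed below $-\pi$, so $\tau=\sigma+\pi$ is not even positive and Proposition \ref{prop_abhs15} cannot be invoked). The change of primitive cannot rescue this: by Lemma \ref{formule}(i) the correction is the coboundary $u\circ\varphi-u$, whose average against any $\varphi$-invariant measure vanishes; on a $\varphi$-invariant circle (or along a periodic orbit) the average action is therefore independent of $\lambda$, so it cannot be raised above $-\pi/2$ by choosing $u$. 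Separately, note that $|h(s_*)|$ near $\pi/2$ forces $\delta\eta(s_*)\approx\pi/2$, so $\psi$ is not a small perturbation and the claimed first-order analysis of the new fixed points and their indices is not justified.

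The paper avoids all of this by never using a radial profile that has to climb back up. It takes $\varphi=\varphi_+\circ\varphi_-$ where $\varphi_+$ comes from a \emph{nonnegative, convex, decreasing} profile $h_+=(\pi/2)\chi_\delta$ realizing the rotation by $\pi$ on a large disk: its Calabi invariant is $+\pi^2/2$ (not negative), its action is everywhere $\geq 0$, and its indices are $\geq -1$. The entire negative Calabi contribution is carried by $\varphi_-$, supported in two disjoint disk-like sets $A_0$ and $-A_0$ that $\varphi_+$ \emph{swaps}; on each, $\varphi_-$ is conjugate to a radial map with bounded rotation angle $2\pi\theta$, $\theta\in(-2,-1)$, whose profile is negative and monotonically increasing with $|h_K'|<2\pi$, hence action $\geq -\pi(1-\delta)$ and indices $\geq -4k+1$. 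Because $\varphi_+$ permutes $A_0$ and $-A_0$, every fixed point of $\varphi$ lies outside $A$ where $\varphi=\varphi_+$ (giving $\sigma\geq 0$ and $\mu\geq -1$ there), while periodic points inside $A$ have period a multiple of $2$ and pick up a loop of Maslov index $k/2\cdot 2=k$ from the ambient rotation, yielding $\mu\geq -4k+k+1=1-3k$. The primitive is modified only inside $A_0\cup(-A_0)$, to the pullback of $\lambda_0$ under the conjugating map, precisely so that the action computations transport. If you want to salvage a proof along your lines, you need this structural idea — decoupling the positive rotation from localized negative rotations in regions permuted by it — rather than a single radial profile plus perturbation.
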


The idea of the proof is the following. The map $\varphi$ consists of a composition $\varphi=\varphi_-\circ \varphi_+$ of two area-preserving diffeomorphisms with support in the interior of $\D$. The map $\varphi_+$ is a rotation of angle $\pi$ on a big disk contained in $\mathrm{int}(\D)$, and in the annulus between the two disks it rotates each circle centered at the origin by an angle in the interval $[0,\pi]$, which becomes 0 when the circle is contained in a small neighborhood of $\partial \D$. 

The map $\varphi_-$ is supported in a set $A$ which consists of two compact subsets $A_0$ and $A_1=-A_0$, where $A_0$ is a region diffeomorphic to a closed disk which is contained in the interior of the upper half disk $\D\cap \{\im z\geq 0\}$ and fills almost all of its area. The restrictions $\varphi_-|_{A_0}$ and $\varphi_-|_{A_1}$ are conjugated to each other by the rotation by $\pi$, and they are conjugated to a compactly supported disk map which rotates most of the disk by an angle $2\pi \theta$, with $\theta>-2$ and very close to $-2$, and then rotates less and less in the negative direction when approaching the boundary.

The maps $\varphi_+$ and $\varphi_-$ are made compatible by ensuring that the disk on which $\varphi_+$ is a rotation by $\pi$ contains the set $A$, which then implies that $\varphi_+$ and $\varphi_-$ commute.

The primitive $\lambda$ of $\omega_0$ coincides with $\lambda_0$ outside of a large disk contained in $\mathrm{int}(\D)$, and in $A_0$ and $A_1$ it coincides with the pull-back of $\lambda_0$ by the area-preserving diffeomorphisms mapping $A_0$ and $A_1$ onto a disk which conjugate $\varphi_-|_{A_0}$ and $\varphi_-|_{A_1}$ to the disk map described above.

The map $\varphi_-$ gives a negative contribution to the Calabi invariant of $\varphi$, which overrides the positive contribution given by $\varphi_+$ and makes $\mathrm{CAL}(\varphi)$ close to $-\pi^2/2$, as in statement (ii). The action of $\varphi_+$ and $\varphi_-$ with respect to $\lambda$ can be computed explicitly, and using the behaviour of the action under composition we can guarantee that the action of $\varphi$ takes values in the interval $(-\pi/2,\pi/2)$, as required by statement (i). The fact that $\varphi_-$ preserves each of the sets $A_0$ and $A_1$ while $\varphi_+$ permutes them guarantees that all fixed points of $\varphi$ lie outside of $A$. There, the map $\varphi$ coincides with $\varphi_+$, and it is easy to show that all its fixed points have non-negative action and Conley-Zehnder index not smaller than $-1$. This gives the stronger inequality in statement (i) and the second part of statement (iii). Finally, the Conley-Zehnder indices of all other periodic points can be estimated quite precisely, and in particular they satisfy the first part of statement (iii).

\medskip

In this section, we will actually prove a generalization of the above proposition, in which instead of subdividing the disk into two half-disks, we subdivide it into $n\geq 2$ equal sectors. The map $\varphi_+$ will then be a rotation of angle $2\pi/n$ on most of $\D$, while $\varphi_-$ will preserve each sector and behave as a negative rotation on a large portion of each of them. This generalization will be used for proving Theorem \ref{thm0}. We will need more information on the map $\varphi$, and in particular precise bounds relating the mean Conley-Zehnder indices of its periodic points to their action. The precise statement is the following:

\begin{prop}
\label{mapn}
Given a natural number $n \geq 2$ and a real number $\epsilon>0$ there exist a primitive $\lambda$ of $\omega_0$ such that $\lambda-\lambda_0$ is supported in the interior of $\D$ and a map $\varphi \in {\rm Diff_c}(\D,\omega_0)$ with the following properties:
\begin{enumerate}[(i)]
\item The action $\sigma_{\varphi,\lambda}$ has the bounds
\[
-\pi+\frac{\pi}{n}<\sigma_{\varphi,\lambda}<\frac{\pi}{n},
\]
with a stronger inequality at fixed points of $\varphi$:
\[
\sigma_{\varphi,\lambda}(z_0)\geq 0 \qquad \forall z_0\in \Fix(\varphi).
\]
\item The Calabi invariant of $\varphi$ satisfies
\[
-\pi^2\left(1-\frac{1}{n}\right)<\Cal(\varphi)< -\pi^2\left(1-\frac{1}{n}\right)+\epsilon.
\]
\item For each $k\in \N$ and each fixed point $z_0$ of $\varphi^k$ the Conley-Zehnder index of $z_0$ satisfies 
\[
\mu(z_0,\varphi^k)\geq -2nk+ \frac{2k}{n} + 1,
\]
with a stronger inequality if $z_0$ is a fixed point of $\varphi$:
\[
\mu(z_0,\varphi^k)\geq -1 \qquad \forall z_0\in \Fix(\varphi).
\]
\item The action $\sigma_{\varphi,\lambda}$ is invariant under $\varphi$.
\item If $z_0\in \Fix(\varphi^k)$ is not a fixed point of $\varphi$, then $k\geq n$.
\end{enumerate}
Furthermore, there is a compact set $A\subset \mathrm{int}(\D)$ which
is invariant under $\varphi$ and has the following properties:
\begin{enumerate}[(i)]
\setcounter{enumi}{5}
\item Every $z_0\in \Fix(\varphi^k)\setminus A$, $k\in \N$, satisfies
\[
\sigma_{\varphi,\lambda}(z_0) \geq 0
\]
and
\[
\frac{2}{\pi} \sigma_{\varphi,\lambda}(z_0) \leq \frac{\overline{\mu}(z_0,\varphi^k)}{k} \leq \left( \frac{2}{\pi} + \epsilon\right) \sigma_{\varphi,\lambda}(z_0).
\]

\item There exists a number $\nu\in (0,\epsilon)$ such that
every $z_0\in \Fix(\varphi^k)\cap A$, $k\in \N$, satisfies
\[
\sigma_{\varphi,\lambda}(z_0) \geq -\pi + \frac{\pi}{n} (1+\nu)
\]
and
\[
\frac{2}{n} + \frac{2n}{\pi}  \sigma_{\varphi,\lambda}(z_0)  - 2 - \nu^2 \leq \frac{\overline{\mu}(z_0,\varphi^k)}{k} \leq \frac{2}{n}  + \frac{2n}{\pi} \sigma_{\varphi,\lambda}(z_0) -2 + \nu^2.
\]
\item There exists  $w_0\in {\rm Fix}(\varphi^n) \cap A$ such that
\[
\sigma_{\varphi,\lambda}(w_0) \leq - \pi + \frac{\pi}{n} (1+\nu) + \nu
\]
and
\[
\frac{\overline{\mu}(w_0,\varphi^n)}{n} = \frac{2}{n} - 2n + 2 \nu,
\] 
where $\nu$ is the number which appears in (vii).
\end{enumerate}
\end{prop}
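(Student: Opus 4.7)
The plan is to construct $\varphi = \varphi_- \circ \varphi_+$ as two commuting pieces supported in $\mathrm{int}(\D)$, following the sketch preceding the statement, and then verify each of (i)--(viii) using Lemmas \ref{formule} and \ref{radialham}. Concretely, let $\varphi_+$ be the time-$1$ map of the radial Hamiltonian $H_+(z)=h_+(|z|^2)$, where $h_+'(s)=-\pi/n$ on $[0,R_0^2]$, smoothly tapers to $0$ on $[R_0^2,R_1^2]$ with $R_0<R_1<1$, and $h_+$ is constant outside; so $\varphi_+$ is rigid rotation by $2\pi/n$ on $\{|z|\le R_0\}$ and the identity outside $\{|z|\le R_1\}$. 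Choose a compact sector-shaped region $A_0$ of area close to $\pi R_0^2/n$ inside the open sector $\{r<R_0,\,|\arg z|<\pi/n\}$, set $A_j=R_{2\pi j/n}(A_0)$ and $A=\bigsqcup_j A_j$, pick an area-preserving $F_0\colon\hat A_0\to A_0$ from the round disk $\hat A_0=\{|\hat z|<\hat R\}$ of matching area, and take $\hat\varphi_-$ to be the time-$1$ map of $\hat H_-(\hat z)=\hat h_-(|\hat z|^2)$ with $\hat h_-'(s)=\pi(n-\eta_0)$ on most of $[0,\hat R^2]$ tapering to $0$ near $\hat R^2$, with $\eta_0\in(0,\epsilon)$. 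Set $\varphi_-|_{A_j}$ equal to the $R_{2\pi j/n}$-conjugate of $F_0\hat\varphi_-F_0^{-1}$, extended by the identity on $\D\setminus A$. Commutativity of $\varphi_\pm$ follows from $R_{2\pi/n}$-equivariance of $\varphi_-$ combined with the fact that $\varphi_+$ coincides with $R_{2\pi/n}$ on a neighborhood of $A$. Finally let $\lambda=\lambda_0+du$ where $u$ is chosen so that $F_0^*\lambda=\lambda_0$ on $\hat A_0$ (equivariantly on the $A_j$) and $u$ vanishes outside a neighborhood of $A$; existence of $u$ follows from the Poincar\'e lemma on the simply connected regions $A_j$.

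Properties (i)--(v) reduce to direct evaluations. For (ii), the homomorphism property and conjugation invariance give $\Cal(\varphi)=\Cal(\varphi_+)+n\,\Cal(\hat\varphi_-)$; Lemma \ref{radialham}(ii) yields $\Cal(\varphi_+)\to\pi^2/n$ and $n\,\Cal(\hat\varphi_-)\to-\pi^2(n-\eta_0)/n$ as the transition annuli shrink and $R_0\to 1$, so the bound in (ii) holds for small $\eta_0$. For (i), Lemma \ref{formule}(ii) splits $\sigma_{\varphi,\lambda}(z)=\sigma_{\varphi_+,\lambda}(z)+\sigma_{\varphi_-,\lambda}(\varphi_+(z))$, with each piece computed via Lemma \ref{formule}(i), (iv) and Lemma \ref{radialham}(i); the coboundary $u\circ\varphi_+-u$ vanishes on $A$ by the $R_{2\pi/n}$-equivariance of $u$. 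Fixed points of $\varphi$ lie outside $A$ (as $\varphi_+$ cyclically permutes the $A_j$), where $\varphi=\varphi_+$ and $\sigma_{\varphi,\lambda}(z)=h_+(|z|^2)-|z|^2h_+'(|z|^2)\ge0$. Property (iv) follows from the radial invariance of $\sigma_{\varphi_+,\lambda_0}$ and $\sigma_{\hat\varphi_-,\lambda_0}$ combined with the $R_{2\pi/n}$-equivariance of the construction. Property (v) is the combinatorial observation that periodic points of $\varphi$ in $A$ require period divisible by $n$, while nontrivial periodic points of $\varphi_+$ in $\D\setminus A$ already have period $\ge n$ by a direct analysis of $h_+'\in[-\pi/n,0]$.

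The Conley--Zehnder bounds (iii), (vi), (vii), (viii) form the heart of the proof. For $z_0\in\Fix(\varphi^k)\setminus A$, the $\varphi_+$-orbit of $z_0$ preserves its radius and stays out of $A$, so $\varphi^t(z_0)=\varphi_+^t(z_0)$ and $\mu(z_0,\varphi^k)=\mu(z_0,\varphi_+^k)$ is given by Lemma \ref{radialham}(iii); the claimed inequalities in (iii) and (vi) follow immediately. For $z_0\in A\cap\Fix(\varphi^k)$ one has $n\mid k$ and $\varphi^k|_A=\varphi_-^k|_A$; the key identity is
\[
\mu(z_0,\varphi^k)=\mu\bigl(F_0^{-1}(z_0),\hat\varphi_-^k\bigr)+\tfrac{2k}{n},
\]
obtained from the commuting-product isotopy $\psi^t=\varphi_-^t\circ\varphi_+^t$ from $\mathrm{id}$ to $\varphi^k$ on $[0,k]$ together with equation (\ref{cz2}) applied to the loop $t\mapsto D\varphi_+^t(z_0)=R_{2\pi t/n}$, whose Maslov index on $[0,k]$ equals $k/n$. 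Combining this with Lemma \ref{radialham}(iii) applied to $\hat\varphi_-$ (where $\hat h_-'\in[0,\pi(n-\eta_0)]$) yields $\mu\ge-2nk+2k/n+1$, proving (iii) on $A$; dividing by $k$, letting $k\to\infty$, and coupling rotation number to action through the radial identity $\sigma=h-sh'$ in the limit of vanishing transition regions, gives (vi) and (vii). The extremal point $w_0=F_0(0)$ realizes (viii): $\hat w_0=0$ is fixed by every $\hat\varphi_-^k$ with $\mu(0,\hat\varphi_-^k)=-2nk+2\lceil k\eta_0\rceil-1$ and $\sigma_{\hat\varphi_-,\lambda_0}(0)=\hat h_-(0)\approx-\pi+\pi\eta_0/n$, giving exactly the values in (viii) with $\nu=\eta_0$. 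I expect the main obstacle to be the correct derivation of this $+2k/n$ Maslov correction together with the assertion that the area-preserving but non-radial conjugation $F_0$ introduces no additional Maslov shift; both rely on $A_j$ being simply connected, so that any two symplectic trivializations of $T\R^2$ over $A_j$ are isotopic. A secondary delicate point is tuning the radial profiles so the estimates in (i), (ii), (vii) fit the $\epsilon$ and $\nu$ slacks, which is achieved by taking the transition annuli arbitrarily narrow.
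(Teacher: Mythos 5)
Your construction coincides with the paper's in every essential respect: the tapered rigid rotation $\varphi_+$ by $2\pi/n$, the negatively twisting $\varphi_-$ obtained by conjugating a radial map through area-preserving identifications of the $n$ rotated copies $A_j$ with a round disk, the primitive $\lambda$ modified through those identifications, the Maslov shift $2k/n$ for periodic points in $A$, the integrality of the rotation number at fixed points producing the $+1$ in the lower bound of (iii), and the centre $w_0=F_0(0)$ as the extremal point for (viii). Two steps, however, do not work as you state them. The splitting of the linearized isotopy at $z_0\in A_j$ fails for the order $\psi^t=\varphi_-^t\circ\varphi_+^t$: for non-integer $t$ the point $\varphi_+^t(z_0)=e^{2\pi it/n}z_0$ generally leaves $A$, so the factor $D\varphi_-^t\bigl(\varphi_+^t(z_0)\bigr)$ is not the linearization of the conjugated $\hat{\varphi}_-$-flow along the orbit of $z_0$. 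One must use the isotopy $\varphi_+^{kt}\circ\varphi_-^{kt}$ instead (same endpoint, hence same index): then $\varphi_-^{kt}(z_0)$ stays in $A_j$, where $D\varphi_+^{kt}$ is the scalar rotation $e^{2\pi ikt/n}$, and \eqref{cz2} gives exactly the shift $2k/n$.

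The more substantive gap is in (vi)--(vii): the two-sided proportionality between $\sigma_{\varphi,\lambda}(z_0)$ and $\overline{\mu}(z_0,\varphi^k)/k$ is \emph{not} achieved by ``taking the transition annuli arbitrarily narrow''. For a radial profile $h$ vanishing at the outer radius one has $\overline{\mu}/k=-2h'(s)/\pi$ and $\sigma=h(s)-sh'(s)$ at a periodic point with $s=|z_0|^2$, and the left inequality $\tfrac{2}{\pi}\sigma\le\overline{\mu}/k$ amounts to $h(s)\le(1-s)\bigl(-h'(s)\bigr)$, i.e.\ to the mean value of $-h'$ over $[s,1]$ not exceeding $-h'(s)$. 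This is a convexity condition on the taper, invariant under rescaling its width; it fails for a concave taper, and it fails badly at any interior point where $h'=0$ but $h>0$ (such a point is fixed by every iterate of $\varphi_+$ and has positive action but zero mean index). The paper enforces the condition by taking the cut-offs $\chi_\delta$ convex and proving the two-sided estimate \eqref{chi_estimate}; with that choice of profile for both $h_+$ and $\hat{h}_-$ your argument closes.
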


Notice that Proposition \ref{map2} directly follows from statements (i), (ii) and (iii) in Proposition \ref{mapn}, in which we choose $n=2$.

The remaining part of this section is devoted to the proof of Proposition \ref{mapn}. We fix once for all the natural number $n\geq 2$, which labels the 1-form $\lambda$ and the map $\varphi$ we are going to construct, and the positive real number $\epsilon$, as in the statement Proposition \ref{mapn}.

\medskip

\noindent \textsc{Construction of $\lambda$.}  For each $j\in \{0,1,\ldots,n-1\}$  we denote the closed sectors
\[
\D_{n,j} : = \left\{z\in \D  \; \Big| \;   j\frac{2\pi}{n}\leq \arg z \leq (j+1)\frac{2\pi}{n}  \right\}.
\]
For $r>0$ we write $\D_r$ for the closed disk of radius $r$,
\[
\D_r:= \{z\in \C\mid |z|\leq r \}.
\]

Here we wish to construct a suitable primitive $\lambda$ of $\omega_0$, which coincides with $\lambda_0$ outside of a compact set which is contained in the union of the interior parts of the sector $\D_{n,j}$, for $j\in \{0,\dots,n-1\}$ and is invariant under the counterclockwise rotation by the angle $2\pi/n$, which we denote by $\rho$:
\[
\rho: \R^2 \rightarrow \R^2, \qquad z\mapsto  e^{2\pi i/n} z.
\]
Choose a positive number $\eta<1$ very close to 1, and fix a smooth subset $A_0$ of the interior of the sector $\D_{n,0}$ diffeomorphic to a closed disk and having area $\eta \pi/n$. Fix moreover an area preserving diffeomorphism $f\in {\rm Diff}(\R^2,\omega_0)$ which maps $A_0$ to a disk centered at the origin of some radius $R>0$,
\[
f(A_0)=\D_{R}.
\]
Since $f$ is area preserving,
\begin{equation}\label{E:R}
	 \pi R^2 = \eta \frac{\pi}{n}.
\end{equation}
Since $f^*\lambda_0$ is a primitive of $\omega_0$, there is a smooth
function $u:\R^2 \rightarrow\R$ satisfying
\[
f^*\lambda_0=\lambda_0 + du.
\]
Now fix a smooth cut-off function $\chi:\R^2 \rightarrow[0,1]$ such that $\chi|_{A_0}\equiv1$ and whose support  is contained in the interior of $\D_{n,0}$.     Then set
\begin{equation}\label{E:lambda}
\lambda := \lambda_0 +d\hat u,
\end{equation}
where $\hat u:\R^2 \to \R$ is given by
\begin{equation}\label{uchapeu}
\hat u = \chi u + \rho^*(\chi u) + \ldots +(\rho^{n-1})^*(\chi u).
\end{equation}

The smooth $1$-form $\lambda$, which is a primitive of $\omega_0$, has the following properties:
\begin{enumerate}[(a)]
\item $\rho^*\lambda=\lambda$;
\item $\lambda=f^*\lambda_0$ on $A_0$;
\item $\lambda=\lambda_0$ outside of a compact set which is contained in the union of the interiors of the sectors $\D_{n,j}$, $j\in \{0,\dots,n-1\}$.
\end{enumerate}

The positive number $\eta<1$ which appears in the construction will be made sufficiently close to 1 in due time, depending on $n$ and $\epsilon$.

\medskip

\noindent{\sc Construction of the map $\varphi^+$.} We will construct the map $\varphi \in{\rm Diff_c}(\D,\omega_0)$ as a composition of two maps $\varphi_+,\varphi_- \in {\rm Diff_c}(\D,\omega_0)$
which are each generated by autonomous Hamiltonians $H_+$ and $H_-$ supported in the interior of the disk that separately we understand well. Here  we define $\varphi_+$ and compile its basic properties.

We start by fixing a family of smooth cut-off functions $\chi_\delta: [0,+\infty) \rightarrow \R$ depending on $\delta \in (0,1/2)$: Let $\chi_\delta$ be a smooth convex function which is supported in $[0,1)$ and satisfies
\begin{equation}
\label{chi}
\chi_\delta(s) = 1 - \delta - s \qquad \forall s\in [0,1-2\delta].
\end{equation}
It follows that $\chi_\delta$ is monotonically decreasing, non-negative, and satisfies
\begin{equation}\label{chi2}
\begin{aligned}
 \max\{1-\delta-s,0\} \leq &\chi_\delta(s) \leq  \max\{ (1 - \delta)(1 - s),0\}, \\
-1 &\leq \chi_\delta'(s) \leq 0,
\end{aligned}
\end{equation}
for every $s\in [0,+\infty)$, and
\begin{equation}\label{chi3}
\begin{aligned}
  0\leq \chi_\delta(s) - s \chi_\delta'(s) \leq 1-\delta &  &\forall s\in [0,+\infty)\\
    \chi_\delta(s) - s \chi_\delta'(s) = 1-\delta &  &\quad\forall s\in [0,1-2\delta]
\end{aligned}
\end{equation}
where the last inequalities follow from the fact that the function $\chi(s) - s \chi_\delta'(s)$ takes the value $1-\delta$ for $s=0$ and $0$ for
$s\geq 1$, and has derivative $-s\chi_\delta''(s)\leq 0$ which vanishes on $[0,1-2\delta]$. Using this same type of argument one can show moreover that
\begin{equation}\label{chi_estimate}
-(1-\delta)\chi_\delta'(s) \leq \chi_\delta(s) - s\chi_\delta'(s) \leq -\chi_\delta'(s) \ \ \forall s\in[0,+\infty).
\end{equation}

Choose $\delta>0$ to be so small that
\begin{equation}
\label{dovedelta}
\mathrm{supp} (\hat{u}) \subset \mathrm{int}\bigl(\D_{\sqrt{1-2\delta}}\bigr),
\end{equation}
where $\hat{u}$ is the smooth function which is defined in \eqref{uchapeu}. Notice that the above assumption requires $\delta$ to go to zero as $\eta$ approaches 1.
Later on we will require $\delta$ to be even smaller, but we will always keep the above requirement. The autonomous Hamiltonian $H_+: \R^2 \rightarrow \R$ defined by
\[
H_+(z) = h_+(|z|^2) \qquad \mbox{with} \qquad h_+(s):= \frac{\pi}{n} \chi_{\delta}(s),
\]
is supported in $\mathrm{int}(\D)$. We denote by $\varphi_+^t\in {\rm Diff_c}(\D,\omega_0)$ the flow of $X_{H_+}$ and by $\varphi_+:=\varphi_+^1$ the time-1 map.
By Lemma~\ref{radialham} the diffeomorphism $\varphi_+^t$ restricts to the counterclockwise rotation of angle $2\pi t/n$ on the disk $\D_{\sqrt{1-2\delta}}$.   Outside of this disk, the map $\varphi_+^t$ rotates each circle about the origin counterclockwise by a non-negative angle which does not exceed $2\pi t/n$ (because of (\ref{chi2})), and which becomes zero outside of some disk of radius smaller than 1. Again by Lemma~\ref{radialham}, the action of $\varphi_+$
with respect to $\lambda_0$ is the radial function
\begin{equation}\label{sigmaphi+}
\sigma_{\varphi_+,\lambda_0}(z) = \frac{\pi }{n} \Bigl( \chi_{\delta}(|z|^2) -  |z|^2\chi_{\delta}'(|z|^2) \Bigr)\qquad\forall z\in\R^2.
\end{equation}
The fact that $\sigma_{\varphi_+,\lambda_0}(z)$ depends only on $|z|$ and the fact that $\varphi_+$ preserves  each circle centred at the origin imply that
\begin{equation}
\label{cons+0}
\sigma_{\varphi_+,\lambda_0} \circ \varphi_+ = \sigma_{\varphi_+,\lambda_0}.
\end{equation}
By (\ref{chi3}), the function $\sigma_{\varphi_+,\lambda_0}$ satisfies
\begin{align}
\label{azphi+1}
\sigma_{\varphi_+,\lambda_0}(z) &= \frac{\pi }{n} (1-\delta) \qquad \forall z\in\D_{\sqrt{1-2\delta}}, \\
\label{azphi+2}
0 \leq \sigma_{\varphi_+,\lambda_0}(z) &\leq \frac{\pi }{n} (1-\delta)   \qquad   \forall  z\in \R^2.
\end{align}

Integrating the action on $\D$, we find that the Calabi invariant of $\varphi_+$ has the upper bound
\begin{equation}\label{calH+}
{\rm CAL}(\varphi_+) \leq \frac{\pi^2}{n}(1-\delta) < \frac{\pi^2}{n}.
\end{equation}
Now we wish to determine the action of $\varphi_+$ with respect to the primitive $\lambda$ from (\ref{E:lambda}).
Recall that $\lambda= \lambda_0 + d\hat u,$ where $\hat u:\R^2 \to \R$, which  is given by \eqref{uchapeu}. Observe that $\hat u$ is invariant under $\varphi_+$. Indeed, this follows from the invariance of $\hat{u}$ under $\rho$, because $\varphi_+=\rho$ on $\D_{\sqrt{1-2\delta}}$ and $\hat u=0$ on $\R^2 \setminus \D_{\sqrt{1-2\delta}}$ thanks to \eqref{dovedelta}. Hence, by Lemma \ref{formule} (i), we have
\begin{equation}\label{llambda}
\sigma_{\varphi_+,\lambda} = \sigma_{\varphi_+,\lambda_0} \qquad \mbox{ on } \R^2.
\end{equation}
Therefore, we can rewrite \eqref{cons+0} as
\begin{equation}
\label{cons+}
\sigma_{\varphi_+,\lambda} \circ \varphi_+ = \sigma_{\varphi_+,\lambda}, 
\end{equation}
and (\ref{azphi+1}) and \eqref{azphi+2} as
\begin{align}
\label{E:action_Hplus_interior}
\sigma_{\varphi_+,\lambda}(z)  &=  \frac{\pi}{n}(1-\delta) \qquad \forall z\in \D_{\sqrt{1-2\delta}},
\\
\label{E:action_Hplus_boundary}
0 \leq \sigma_{\varphi_+,\lambda}(z) &\leq \frac{\pi}{n} (1-\delta)   \qquad\forall z\in \R^2.
\end{align}

Now we wish to establish suitable bounds on the Conley-Zehnder indices of the periodic points of $\varphi_+$. By using Lemma \ref{radialham} (iii) and the fact that $h_+''\geq 0$, we find that the Conley-Zehnder index of a point $z_0\in\Fix(\varphi_+^k)\setminus \{0\}$, $k\in \N$, has the value
\begin{equation}
\label{cczz1}
\mu(z_0,\varphi_+^k)= -\frac{2kh'_+(|z_0|^2)}{\pi}-1= -\frac{2k}{n}\chi'_\delta(|z_0|^2)-1,
\end{equation}
and hence
\begin{equation}\label{E:CZ_Hplus0}
-1\leq \mu(z_0,\varphi_+^k) \leq \frac{2k}{n}-1, \qquad \forall z_0\in \Fix(\varphi_+^k)\setminus \{0\},
\end{equation} 
because $-1\leq \chi_{\delta}'\leq 0$.
Moreover, the fixed point $0$ of $\varphi$ has Conley-Zehnder index
\begin{equation}\label{E:CZ_Hplus}
\mu(0,\varphi_+^k)= 2\left\lceil -\frac{k h_+'(0)}{\pi} \right\rceil-1 = 2\left\lceil -\frac{k}{n}\chi'_\delta(0)\right\rceil-1 =2\left\lceil \frac{k}{n}\right\rceil-1,
\end{equation} because $\chi'_\delta(0)=-1$.

We conclude our study of the map $\varphi_+$ by estimating the mean Conley-Zehnder index of its periodic points in terms of their action. We start with a fixed point $z_0$ of $\varphi_+^k$ other than $0$. For any $h\in \N$ the identity \eqref{cczz1} applied to $z_0\in \mathrm{Fix}(\varphi_+^{hk})\setminus \{0\}$ gives us
\[
\mu(z_0,\varphi_+^{hk}) = - \frac{2hk}{n} \chi_{\delta}'(|z_0|^2) -1,
\]
and dividing by $hk$ and taking a limit for $h\rightarrow +\infty$ we obtain
\begin{equation}
\label{mcczz}
\frac{\overline{\mu}(z_0,\varphi^k_+)}{k} = - \frac{2}{n} \chi_{\delta}'(|z_0|^2).
\end{equation}
In the case of the fixed point $z_0=0$ we have by \eqref{E:CZ_Hplus}
\[
\frac{\overline{\mu}(0,\varphi_+^k)}{k} = \frac{2}{n} =   - \frac{2}{n} \chi_{\delta}'(0),
\]
and hence the formula (\ref{mcczz}) holds for all $z_0\in \Fix(\varphi^k_+)$.

Let $z_0$ be a fixed point of $\varphi^k_+$.
Using \eqref{llambda}, \eqref{sigmaphi+} and \eqref{chi_estimate}, we obtain the inequality
\[
-(1-\delta)\frac{\pi}{n}  \chi_\delta'\left(|z_0|^2 \right) \leq  \sigma_{\varphi_+,\lambda}(z_0)\leq -\frac{\pi}{n}  \chi_\delta'\left(|z_0|^2 \right) ,
\] 
which can be restated as
\[
\frac{n}{\pi} \sigma_{\varphi_+,\lambda}(z_0)\leq - \chi_{\delta}' (|z_0|^2) \leq \frac{n}{\pi(1-\delta)} \sigma_{\varphi_+,\lambda}(z_0).
\]
Then \eqref{mcczz} implies the bounds
\[
\frac{2}{\pi} \sigma_{\varphi_+,\lambda}(z_0) \leq \frac{\overline{\mu}(z_0,\varphi^k)}{k} \leq \frac{2}{\pi(1-\delta)}  \sigma_{\varphi_+,\lambda}(z_0).
\]
By choosing $\delta$ small enough we conclude that
\begin{equation}
\label{mcz+}
\frac{2}{\pi} \sigma_{\varphi_+,\lambda}(z_0) \leq \frac{\overline{\mu}(z_0,\varphi^k)}{k} \leq \left( \frac{2}{\pi}+\epsilon \right)  \sigma_{\varphi_+,\lambda}(z_0) \qquad \forall z_0\in \mathrm{Fix}(\varphi_+^k).
\end{equation}

\medskip

\noindent \textsc{Construction of the map $\varphi_K$.} The map $\varphi_-$ will be defined by starting from a map $\varphi_K$ which is induced by an autonomous radial Hamiltonian $K$, which we now wish to define. First we fix some real number $\theta$ satisfying
\begin{equation}\label{E:theta}
			-n< \theta <-n+1,
\end{equation}
and close enough to $-n$, as we will later specify.
Recall that $R>0$ was fixed above (\ref{E:R}), so that there is an area-preserving diffeomorphism $f:A_0\rightarrow\D_R$ from some
$A_0\subset \mathrm{int}(\D_{n,0})$. Given $\delta\in (0,1/2)$ as in the constriction of $\varphi_+$, consider the following rotationally invariant Hamiltonian 
\[
K:\C\rightarrow\R,\qquad K(z)=h_K(|z|^2)\quad \mbox{with} \qquad h_K(s) := \pi R^2\theta\chi_{\delta}\left(\frac{s}{R^2}\right),
\]
which is supported in the interior of $\D_R$. 
Let $\varphi_{K}^t$ denote the flow of $K$ and let $\varphi_K:=\varphi_K^1$ be the time-$1$ map. For any $t\in[0,1]$, the map $\varphi^t_K$ rotates each concentric circle about the origin by some angle in the interval 
\[
[2\pi\theta,0]\subset(-2\pi n,0],
\]
since 
\[
h_K'(|z|^2) = \pi \theta \chi_\delta' (|z|^2/R^2 ) \qquad \mbox{and} \qquad -1\leq \chi_\delta'\leq 0,
\]
see Lemma \ref{radialham}.
By Lemma \ref{radialham} (i),
\begin{equation}\label{sigmaphik}
\begin{aligned}
\sigma_{\varphi_K,\lambda_0}(z) & = h_K(|z|^2) -|z|^2h_K'(|z|^2) \\
 & = \pi R^2\theta \left( \chi_\delta\left(\frac{|z|^2}{R^2}\right) - \frac{|z|^2}{R^2}\chi'_\delta\left(\frac{|z|^2}{R^2}\right)\right) \qquad \forall z\in\R^2,
\end{aligned}
\end{equation}
and in particular
\begin{equation}
\label{consK} 
\sigma_{\varphi_K,\lambda_0} \circ \varphi_K = \sigma_{\varphi_K,\lambda_0}.
\end{equation}
By \eqref{sigmaphik} and (\ref{chi3}) we have
\begin{align*}
\pi R^2\theta(1-\delta) \leq & \ \sigma_{\varphi_K,\lambda_0}(z)\leq 0 & &\forall z \in \R^2, \\
\sigma_{\varphi_K,\lambda_0}(z) &= \pi R^2\theta(1-\delta) & &\forall z\in \D_{R\sqrt{1-2\delta}}.
\end{align*}
Together with \eqref{E:R} we get
\begin{equation}\label{E:action_K}
\begin{aligned}
\eta \frac{\pi}{n} \theta (1-\delta) \leq & \sigma_{\varphi_K,\lambda_0}(z)\leq 0 &  &\forall z\in\R^2, \\
 & \sigma_{\varphi_K,\lambda_0}(z)= \eta \frac{\pi}{n} \theta (1-\delta) &  &\forall z\in \D_{R\sqrt{1-2\delta}}.
\end{aligned}
\end{equation}
Integrating the non-positive function $\sigma_{\varphi_K,\lambda_0}$ over $\D_R$ we find the following upper bound on the Calabi invariant of $\varphi_K$:
\[
\Cal(\varphi_K) \leq \eta \frac{\pi}{n} \theta (1-\delta) \pi R^2 (1-2\delta) = \eta^2 \frac{\pi^2}{n^2} \theta (1-\delta)(1-2\delta).
\]
When $\theta$ approaches $-n$, $\eta$ approaches 1 and $\delta$ goes to $0$, the quantity on the right-hand side converges to $-\pi^2/n$. Therefore, by choosing $\theta>-n$ close enough to $-n$, $\eta<1$ close enough to 1 and $\delta>0$ close enough to $0$ we obtain the bound
\begin{equation}
\label{E:Cal_K}
\Cal(\varphi_K) < - \frac{\pi^2}{n} + \frac{\epsilon}{n}.
\end{equation}
The number $\theta$ is now fixed by the above requirement, together with
\begin{equation}
\label{extra}
\theta  < -n + \epsilon.
\end{equation}
The distances of $\eta$ from $1$ and the number $\delta$ will be made even smaller below.

We now estimate the Conley-Zehnder indices of the periodic points of $\varphi_K$.  Let $k\in \N$. By Lemma \ref{radialham} (iii) we have
\begin{equation}
\label{cz0}
\mu(0,\varphi_K^k) =  2\left\lceil - \frac{kh'_K(0)}{\pi}\right\rceil-1 = 2\bigl\lceil - k \theta \chi'_{\epsilon}(0) \bigr\rceil -1 = 2\lceil k \theta\rceil -1.
\end{equation}
Now $\theta>-n$ implies $k\theta>-kn$, from which $\lceil k\theta \rceil\geq -kn+1$ and hence
\[
2\lceil k \theta\rceil -1 \geq -2kn + 1.
\]
Together with the inequality $k\theta<k(-n+1)\leq -k \leq -1$ we deduce that
\[
-2kn + 1\leq  \mu(0,\varphi_K^k) \leq -1.
\]
Now let $z_0\in \Fix(\varphi_K^k)\setminus \{0\}$. By Lemma \ref{radialham} (iii), the number $k\theta \chi_{\delta}'(|z_0|^2/R^2)$ is an integer, and either
\begin{equation}
\label{cczz2}
\mu(z_0,\varphi_K^k) = - \frac{2kh'_K(|z_0|^2)}{\pi} = - 2k\theta\chi'_{\delta}\left(\frac{|z_0|^2}{R^2}\right),
\end{equation}
if $\chi_{\delta}''(|z_0|^2/R^2)>0$, or
\begin{equation}
\label{cczz3}
\mu(z_0,\varphi_K^k) =- 2k\theta\chi'_{\delta}\left(\frac{|z_0|^2}{R^2}\right)-1,
\end{equation}
if $\chi_{\delta}''(|z_0|^2/R^2)=0$. In both cases, the bounds $0\geq \chi_{\delta}'\geq -1$ and $0>\theta>-n$ imply $\theta \chi_{\delta}' <n$ and hence 
\[
\mu(z_0,\varphi_K^k) \geq - 2 \left\lceil k\theta\chi'_{\delta}\Bigl(\frac{|z_0|^2}{R^2}\Bigr)\right\rceil- 1 \geq  - 2kn +1.
\]
The integer $k\theta\chi'_{\delta}(|z_0|^2/R^2)$ appearing in (\ref{cczz2}) and (\ref{cczz3}) is non-negative. When this integer is positive, we get
\[
\mu(z_0,\varphi_K^k) \leq -2.
\]
When it is zero, we have $\chi_{\delta}'(|z_0|^2/R^2)=0$ and hence $\chi_{\delta}''(|z_0|^2/R^2)=0$. This implies that the alternative (\ref{cczz3}) holds and therefore
\[
\mu(z_0,\varphi_K^k) = -1.
\]
We conclude that in all cases
\[
\mu(z_0,\varphi_K^k) \leq -1.
\]
We can summarize the above discussion by stating the following bounds on the Conley-Zehnder index of an arbitrary fixed point of $\varphi_K^k$:
\begin{equation}\label{E:CZ_K}
-2nk +1\leq \mu(z_0,\varphi_K^k)\leq -1 \qquad\forall z_0\in\Fix(\varphi_K^k).
\end{equation}

We conclude the analysis of $\varphi_K$ by obtaining bounds for the mean Conley-Zehnder index of $z_0\in \Fix(\varphi_K^k)$ in terms of its action. By \eqref{cz0}, \eqref{cczz2} and \eqref{cczz3} the mean Conley-Zehnder index of a periodic point is given by the formula
\begin{equation}
\label{ccczzz}
\frac{\overline{\mu}(z_0,\varphi^k_K)}{k} = - 2 \theta \chi_{\delta}' \left( \frac{|z_0|^2}{R^2} \right) \qquad \forall z_0\in \Fix (\varphi_K^k).
\end{equation}
Using \eqref{sigmaphik} and \eqref{chi_estimate} we obtain
\[
-(1-\delta)\pi R^2\theta \chi_\delta'\left(\frac{|z_0|^2}{R^2} \right) \geq  \sigma_{\varphi_K,\lambda_0}(z')\geq -\pi R^2\theta \chi_\delta'\left(\frac{|z_0|^2}{R^2} \right),
\]
which we can restate as
\[
- \frac{\sigma_{\varphi_K,\lambda_0}(z')}{\pi R^2 \theta (1-\delta)}
 \leq \chi_\delta'\left(\frac{|z_0|^2}{R^2} \right) \leq - \frac{\sigma_{\varphi_K,\lambda_0}(z')}{\pi R^2 \theta}.
\]
Then \eqref{ccczzz} implies
\[
\frac{2}{\pi R^2 (1-\delta)} \sigma_{\varphi_K,\lambda_0}(z_0)
 \leq\frac{\overline{\mu}(z_0,\varphi^k_K)}{k} \leq \frac{2}{\pi R^2}  \sigma_{\varphi_K,\lambda_0}(z_0).
\]
By \eqref{E:R} we have $\pi R^2=\eta\pi/n<\pi/n$, and we conclude that
\begin{equation}
\label{mczK}
\frac{2n}{\eta\pi(1-\delta)}  \sigma_{\varphi_K,\lambda_0}(z_0) \leq \frac{\overline{\mu}(z_0,\varphi^k_K)}{k} \leq \frac{2n}{\pi} \sigma_{\varphi_K,\lambda_0}(z_0) \qquad \forall z_0\in \Fix(\varphi^k).
\end{equation}

\medskip

\noindent \textsc{Construction of the map $\varphi_-$.} Now we are ready to define $\varphi_-\in {\rm Diff_c}(\D,\omega_0)$.   Recall that $f:A_0\rightarrow\D_R$ is an area preserving diffeomorphism from $A_0\subset\textup{int}(\D_{n,0})$. The set $\rho^j(A_0)$ is contained in the interior of the sector $\D_{n,j}$ for every $ j\in \{0,\ldots,n-1\}$, and, in particular, these sets are mutually disjoint. Set
\[
A:= \bigcup_{j=0}^{n-1}\rho^j(A_0).
\]
Define the autonomous Hamiltonian $H_-:\R^2 \rightarrow\R$ by
\[
H_-(z):=\left\{ \begin{aligned}
&K\circ f(\rho^{-j}(z)) &   &\forall z\in \rho^j(A_0),j\in \{0,1,\ldots,n-1\},\\
& 0            &    &\forall z\in \R^2 \setminus A,
\end{aligned}\right.
\]
which is smooth and supported in $A\subset \mathrm{int}(\D)$, because $K:\R^2 \rightarrow\R$ is supported in the interior of $\D_R$. We denote the flow of $H_-$ by $\varphi_-^t\in {\rm Diff_c}(\D,\omega_0)$, and its time-1 map by $\varphi_-=\varphi_-^1$.  By construction, $\varphi_-$ is supported in $A$ and
\[
\varphi_-|_{A_0} = f^{-1} \circ \varphi_K \circ f|_{A_0}.
\]
Moreover, $\varphi_-$ commutes with $\rho$:
\[
\varphi_- \circ \rho = \rho \circ \varphi_-.
\]
With the help of (\ref{calham}) the Calabi invariant of $\varphi_-$ is easily calculated to be
\begin{align*}
\Cal(\varphi_-) &= 2\int_\C H_-\, dx\wedge dy = 2\sum_{k=0}^{n-1}\int_{\rho^k(A)} H_-\, dx\wedge dy \nonumber \\
&= 2n\int_A H_-\, dx\wedge dy =2n\int_{A} K\circ f\, dx\wedge dy \nonumber \\
&= 2n\int_{\D_R} K\, dx\wedge dy = 2n\int_{\C} K\, dx\wedge dy = n\cdot\Cal(\varphi_K),
\end{align*}
where we have used the facts that $\rho$ and $f$ are area-preserving and $H_-$ is invariant under $\rho$. By \eqref{E:Cal_K} we find the upper bound
\begin{equation} 
\label{Calabi_varphi_-}
\Cal(\varphi_-) < -\pi^2 + \epsilon.
\end{equation}

Let us estimate the action function of $\varphi_-$ with respect to the special primitive~$\lambda$. The flow $\varphi^t_-$ fixes all points in the connected set $\C\setminus A$, so the action function $\sigma_{\varphi_-,\lambda}$ at such points vanishes. At points in $A_0$, using Lemma \ref{formule} (iv) we find,
\begin{equation}\label{phikphimenos}
\sigma_{\varphi_{-},\lambda}(z)=\sigma_{\varphi_{K},\lambda_0}\big(f(z)\big)\qquad\forall z\in A_0.
\end{equation}
Again, using Lemma \ref{formule} (iv) and the fact that $\lambda$ is $\rho$-invariant, one finds that  $\sigma_{\varphi_{-},\lambda}$ is invariant under $\rho$:
\begin{equation}
\label{sigma-invrho}
\sigma_{\varphi_-,\lambda} \circ \rho = \sigma_{\varphi_-,\lambda},
\end{equation}
and by using \eqref{consK} that $\sigma_{\varphi_{-},\lambda}$ is invariant under $\varphi_-$:
 \begin{equation}
 \label{cons-}
 \sigma_{\varphi_-,\lambda} \circ \varphi_- = \sigma_{\varphi_-,\lambda}.
\end{equation}
By \eqref{sigma-invrho} we find, using also our estimates (\ref{E:action_K}) on the action of $\varphi_K$ with respect to $\lambda_0$, that \begin{eqnarray}
\label{E:action_A1}
-\pi(1-\delta)< \eta \frac{\pi}{n} \theta (1-\delta) \leq  \sigma_{\varphi_-,\lambda}(z) \leq 0&   \qquad&\forall z\in A, \\
\label{E:action_A2}
 \sigma_{\varphi_-,\lambda}(z) = 0&   \qquad &\forall z\in \R^2 \setminus A.
\end{eqnarray}
Now let us consider the Conley-Zehnder indices at periodic points of $\varphi_-$.   Suppose that
\[
z_0\in\Fix(\varphi_-^k)
\]
for some $k\in\N$. If $z_0\in \rho^j(A_0)$ then the invariance of the Conley-Zehnder index under conjugacy implies
\begin{equation}
\label{mu=mu}
\mu(z_0,\varphi_-^k)=\mu(f(\rho^{-j}(z_0)),\varphi_K^k).
\end{equation}
Combining this with (\ref{E:CZ_K}) we obtain
\begin{equation}\label{CZ:Hminus_A}
-2nk + 1 \leq \mu(z_0,\varphi_-^k)\leq -1\qquad \forall z_0\in A.
\end{equation}
All points in $\R^2\setminus A$ are fixed by $\varphi_-$. Therefore
\begin{equation}\label{muphi_-} 
\mu(z_0,\varphi_-^k) = -1 \qquad \forall z_0 \in \R^2 \setminus A.\end{equation}
By \eqref{phikphimenos}, the invariance of $\sigma_{\varphi_-,\lambda}$ under $\rho$ and \eqref{mu=mu}, the bounds (\ref{mczK}) on the mean Conley-Zehnder indices of the periodic points of $\varphi_K$ translate into
\begin{equation}
\label{mcz-}
\frac{2n}{\eta\pi(1-\delta)}  \sigma_{\varphi_-,\lambda_0}(z_0) \leq \frac{\overline{\mu}(z_0,\varphi^k_-)}{k} \leq \frac{2n}{\pi} \sigma_{\varphi_-,\lambda_0}(z_0)
 \qquad \forall z_0\in \Fix(\varphi^k_-)\cap A.
\end{equation}

\textsc{Construction of the map $\varphi$.} We start by observing that the maps $\varphi^+$ and $\varphi^-$ commute:
\begin{equation}\label{commute}
\varphi_+ \circ \varphi_- = \varphi_- \circ \varphi_+
\end{equation}
Indeed, since $\varphi_+=\rho$ on $\D_{\sqrt{1-2\delta}}$, $\varphi_-$ commutes with $\rho$ and 
\[
\supp(\varphi_-) \subset A\subset \D_{\sqrt{1-2\delta}},
\]
we get
\begin{align*}
& z\in \D_{\sqrt{1-2\delta}} && \Rightarrow && \varphi_+(\varphi_-(z)) = \rho(\varphi_-(z)) = \varphi_-(\rho(z)) = \varphi_-(\varphi_+(z)), \\
& z\not\in \D_{\sqrt{1-2\delta}} && \Rightarrow && \varphi_+(\varphi_-(z)) = \varphi_+(z) = \varphi_-(\varphi_+(z)),
\end{align*}
as desired.

The map $\varphi\in {\rm Diff_c}(\D,\omega_0)$ is defined to be the composition
\[
\varphi:= \varphi_+ \circ \varphi_- = \varphi_- \circ \varphi_+.
\]
From the homomorphism property of the Calabi invariant
\[
		\Cal(\varphi)=\Cal(\varphi_-)+\Cal(\varphi_+)
\]
and so by (\ref{calH+}) and \eqref{Calabi_varphi_-} we obtain the upper bound
\begin{equation}
\label{calalto}
\Cal(\varphi) < \frac{\pi^2}{n}- \pi^2 + \epsilon = - \pi^2 \left( 1 - \frac{1}{n} \right) + \epsilon.
\end{equation}

Since $\varphi_+=\rho$ on the disk $\D_{\sqrt{1-2\delta}}$, which contains the support of $\sigma_{\varphi_-,\lambda}$,  \eqref{sigma-invrho} implies
\begin{equation}
\label{questo}
\sigma_{\varphi_-,\lambda}\circ \varphi_+ = \sigma_{\varphi_-,\lambda}.
\end{equation}
Together with Lemma \ref{formule} (ii) this implies the following formula for the action of $\varphi$ with respect to $\lambda$:
\begin{equation}\label{action_varphi_breakup}
\sigma_{\varphi,\lambda}= \sigma_{\varphi_- \circ \varphi_+,\lambda} = \sigma_{\varphi_-,\lambda}\circ \varphi_+ + \sigma_{\varphi_+,\lambda} = \sigma_{\varphi_-,\lambda} + \sigma_{\varphi_+,\lambda}.
\end{equation}
Moreover, the action is $\varphi$-invariant:
\begin{equation}
\label{cons}
\sigma_{\varphi,\lambda}\circ \varphi = \sigma_{\varphi,\lambda}.
\end{equation}
Indeed, by \eqref{cons+}, \eqref{cons-} and \eqref{questo} we have
\[
\begin{split}
\sigma_{\varphi,\lambda}\circ \varphi &= \sigma_{\varphi_-,\lambda} \circ \varphi_- \circ \varphi_+ + \sigma_{\varphi_+,\lambda} \circ \varphi_+ \circ \varphi_- \\ &= \sigma_{\varphi_-,\lambda}  \circ \varphi_+ + \sigma_{\varphi_+,\lambda} \circ \varphi_- \\ &= \sigma_{\varphi_-,\lambda} + \sigma_{\varphi_+,\lambda} \circ \varphi_- = \sigma_{\varphi_+\circ \varphi_-,\lambda} = \sigma_{\varphi,\lambda},
\end{split}
\]
where in the last but one equality we have used Lemma \ref{formule} (ii). Therefore, we have shown that the action $\sigma_{\varphi,\lambda}$ is $\varphi$-invariant, as claimed in statement (iv).

Thanks to \eqref{E:action_Hplus_boundary} and \eqref{E:action_A2}, the identity \eqref{action_varphi_breakup} gives 
\begin{equation}
\label{azfuori}
0\leq \sigma_{\varphi,\lambda}(z)	<\frac{\pi}{n}  \qquad\forall z\in \R^2 \setminus A.
\end{equation}
By (\ref{E:action_Hplus_interior}) and (\ref{E:action_A1})  we also have
\begin{equation}
\label{stimaz}
\left( - \pi + \frac{\pi}{n} \right) (1-\delta) < \frac{\pi}{n} (\eta \theta + 1) (1-\delta) \leq 
\sigma_{\varphi,\lambda}(z) \leq \frac{\pi}{n}(1-\delta) \qquad\forall z\in A,
\end{equation}
where in the first inequality we have used $\eta \theta > -\eta n > -n$.
In particular, the above two bounds imply 
\begin{equation}
\label{actionbound}
-\pi +\frac{\pi}{n} <	\sigma_{\varphi,\lambda}(z)	<\frac{\pi}{n} \qquad \forall z\in \R^2,
\end{equation}
which proves the first part of statement (i).
Now suppose that $z_0$ is a fixed point of $\varphi$.  Then $z_0$ is not in $A$, because  $\varphi_-$ preserves each component $\rho^j(A_0)$, while $\varphi_+$ maps $\rho^j(A_0)$ to $\rho^{j+1}(A_0)$ for every $j=0,\ldots, n-1$.  From (\ref{azfuori}) we have
\[						
\sigma_{\varphi,\lambda}(z_0)\geq0 \qquad \forall z_0\in \Fix(\varphi),
\]
concluding the proof of statement (i).

By integrating the first inequality of \eqref{actionbound} we get
\[
\Cal(\varphi) = \int_{\R^2} \sigma_{\varphi,\lambda}\, \omega_0 = \int_{\D} \sigma_{\varphi,\lambda}\, \omega_0 > -\pi^2 + \frac{\pi^2}{n}.
\]
Together with \eqref{calalto}, this proves the bounds on the Calabi invariant of $\varphi$ which are stated in (ii).

The fact that $\varphi_+$ cyclically permutes the components $A_j$ of $A$ and the fact that $\varphi_-$ preserves each of them imply that the periodic points which are contained in $A$ have period multiple of $n$. Since $\varphi=\varphi_+$ on $\C\setminus A$, the fact that all the periodic points of $\varphi_+$ which are not fixed points have period at least $n$ implies that the same is true for the periodic points of $\varphi$ which are not fixed by $\varphi$. We conclude that if $z_0$ is in $\Fix(\varphi^k)$ for some $k\in \N$ but not in $\Fix (\varphi)$, then $k\geq n$. This proves statement (v).

We now estimate the Conley-Zehnder indices of the periodic points of $\varphi$. We start by the fixed points of $\varphi$, which as we have seen belong to the complement of $A$, on which $\varphi=\varphi_+$. A fixed point $z_0$ of $\varphi_+$ is either outside of the support of $\varphi_+$ or $z_0=0$. In the first case, 
\[
\mu(z_0,\varphi^k)= \mu(z_0,\varphi^k_+) = \mu(z_0,{\rm id})=-1,
\]
and in the second case
\[
\mu(0,\varphi^k)= \mu(0,\varphi^k_+) = 2\left\lceil\frac{k}{n} \right \rceil-1,
\]
by \eqref{E:CZ_Hplus}. In either case, we have
\[
\mu(z_0,\varphi^k) \geq -1 \qquad \forall z_0\in \Fix(\varphi), \; \forall k\in \N,
\]
proving the second part of statement (iii).

The study of an arbitrary periodic point is simplified by the fact that, by \eqref{commute},
\[
\varphi^k = \varphi_+^k \circ \varphi_-^k \qquad \forall k\in\N.
\]
Consider a periodic point
\[
z_0\in\Fix(\varphi^k)
\]
for some $k\in\N$.
Since the isotopy $\{\varphi_+^{kt}\circ\varphi_-^{kt}\}_{t\in[0,1]}$ is compactly supported and its time-1 map is $\varphi_+^k \circ \varphi_-^k = \varphi^k$, the Conley-Zehnder index $\mu(z_0,\varphi^k)$ is the Conley-Zehnder index of the path of symplectic automorphisms
\begin{equation}\label{E:Z_final}
\Phi:[0,1] \to \mathrm{Sp}(2), \;\; \Phi(t):=D\big(\varphi_+^{kt}\circ\varphi_-^{kt}\big)(z_0) = D\varphi_+^{kt}\big(\varphi_-^{kt}(z_0)\big)\circ D\varphi_-^{kt}(z_0).
\end{equation}
Suppose first that $z_0$ is not in $A$. Then $z_0$ is outside the support of the map $\varphi_-^{k}$ and hence $z_0$ is a fixed point of $\varphi_+^k$ and 
\[
\mu(z_0,\varphi^k) = \mu(z_0,\varphi_+^k).
\]
Therefore, (\ref{E:CZ_Hplus0}) and (\ref{E:CZ_Hplus}) imply
\begin{equation}
\label{CZ_Hfinal1}
-1\leq \mu(z_0,\varphi^k)\leq 2\left\lceil\frac{k}{n} \right \rceil-1 \qquad \forall z_0\in \Fix(\varphi^k) \setminus A.
\end{equation}
In particular, the Conley Zehner index of fixed points of $\varphi^k$ outside of $A$ trivially satisfies the lower bound stated in statement (iii). Moreover, $\overline{\mu}(z_0,\varphi^k) = \overline{\mu}(z_0,\varphi_+^k)$ and, using \eqref{mcz+} and the fact that $\sigma_{\varphi,\lambda}(z_0) = \sigma_{\varphi^+,\lambda}(z_0)$ by \eqref{action_varphi_breakup} and by the fact that $\sigma_{\varphi^-,\lambda}$ vanishes outside $A$, we obtain
\[
\frac{2}{\pi} \sigma_{\varphi,\lambda}(z_0) \leq \frac{\overline{\mu}(z_0,\varphi^k)}{k} \leq \left( \frac{2}{\pi} + \epsilon \right) \sigma_{\varphi,\lambda}(z_0) \qquad \forall z_0\in \Fix(\varphi^k) \setminus A.
\]
Together with (\ref{azfuori}), this bound proves statement (vi).

Now suppose that $z_0$ is in $A$.   We claim that $k=nk_0$ for some $k_0\in\N$ and
that $z_0$ is a fixed point of $\varphi_-^k$.   Indeed, $\varphi_-^k(z_0)$ belongs to $A\subset \D_{\sqrt{1-2\delta}}$ and so 
\[
z_0 = \varphi_+^k\circ\varphi_-^k(z_0) = \rho^k \circ \varphi^k_{-}(z_0).
\]
The point $z_0$ belongs to $\rho^j(A_0)$ for some $j\in \{0,\dots,n-1\}$, and since $\varphi_-$ maps $\rho^j(A_0)$ into itself, $\varphi^k_{-}(z_0)$ belongs to $\rho^j(A_0)$ too. Then the above identity implies that $k$ is a multiple of $n$ and $z_0$ is fixed by $\varphi_-^k$. This proves the claim.  

Moreover, the fact that $z_0$ is in $A$ implies that $\varphi_-^{kt}(z_0)$ is in $A\subset\D_{\sqrt{1-2\delta}}$ for all $t\in[0,1]$, so 
\[
D\varphi_+^{kt}(\varphi_-^{kt}(z_0))=e^{\frac{2\pi}{n} ink_0t}=e^{2\pi i k_0 t} \qquad \forall t\in[0,1],
\]
and (\ref{E:Z_final}) reduces to
\[
\Phi(t)= e^{2\pi ik_0t} \cdot D\varphi_-^{kt}(z_0)\qquad \forall t\in[0,1].
\]
Therefore,
\begin{equation}
\label{relind}
\begin{aligned}
\mu(z_0,\varphi^k) &= \mu(\Phi)=2\, {\rm Maslov}(e^{2\pi ik_0\cdot}) + \mu(D\varphi_-^{k\cdot}(z_0))\\
& = 2k_0 + \mu(z_0,\varphi^k_-) = \frac{2k}{n} + \mu(z_0,\varphi^k_-),
\end{aligned}
\end{equation}
and by \eqref{CZ:Hminus_A} we get
\[
-2nk+\frac{2k}{n}+1 \leq  \mu(z_0,\varphi^k) \leq \frac{2k}{n} -1 \qquad \forall z_0 \in \Fix(
\varphi^k)\cap A.
\]
The above lower bound is precisely the lower bound claimed in statement (iii), whose proof is now complete. 

Now recall that $\theta\in (-n,-n+1)$ has been fixed and set
\[
\nu:= \theta + n.
\]
The number $\nu$ is positive and smaller than $\epsilon$ because of \eqref{extra}. By \eqref{stimaz} we have
\[
\begin{split}
\sigma_{\varphi,\lambda}(z_0) &\geq \frac{\pi}{n} \bigl( (-n+\nu) \eta + 1\bigr) (1-\delta) \geq \frac{\pi}{n} \bigl( (-n+\nu) \eta + 1\bigr) \\ & \geq \frac{\pi}{n} \bigl( -n+\nu + 1) = -\pi + \frac{\pi}{n}(1+\nu).
\end{split}
\]
This proves the first bound
\[
\sigma_{\varphi,\lambda}(z_0) \geq -\pi + \frac{\pi}{n}(1+\nu),
\]
which is stated in (vii).  In order to prove the second bound, notice that the identity \eqref{relind} implies
\[
\overline{\mu}(z_0,\varphi^k) = \frac{2k}{n} + \overline{\mu}(z_0,\varphi^k_-).
\]
Then, the bounds \eqref{mcz-} together with the identity
\[
\sigma_{\varphi_-,\lambda}(z_0) = \sigma_{\varphi,\lambda}(z_0) - \sigma_{\varphi_+,\lambda}(z_0) = \sigma_{\varphi,\lambda}(z_0) - \frac{\pi}{n} (1-\delta),
\]
which follows from \eqref{E:action_Hplus_interior}, give
\[
\begin{split}
\frac{\overline{\mu}(z_0,\varphi^k)}{k} &= \frac{2}{n} + \frac{\overline{\mu}(z_0,\varphi^k_-)}{k} \leq \frac{2}{n} + \frac{2n}{\pi} \sigma_{\varphi^-,\lambda}(z_0) \\ &= \frac{2}{n} + \frac{2n}{\pi} \sigma_{\varphi,\lambda}(z_0) - 2 (1-\delta) = \frac{2}{n} + \frac{2n}{\pi} \sigma_{\varphi,\lambda}(z_0) -2 + 2\delta,
\end{split}
\]
and
\[
\begin{split}
\frac{\overline{\mu}(z_0,\varphi^k)}{k} &= \frac{2}{n} + \frac{\overline{\mu}(z_0,\varphi^k_-)}{k} \geq \frac{2}{n} +\frac{2n}{\eta\pi(1-\delta)}   \sigma_{\varphi_-,\lambda}(z_0) \\ &= \frac{2}{n} + \frac{2n}{\eta\pi(1-\delta)}  \sigma_{\varphi,\lambda}(z_0) -\frac{2n}{\eta\pi(1-\delta)} \frac{\pi}{n} (1-\delta) \\ &= \frac{2}{n} + \frac{2n}{\eta\pi(1-\delta)}  \sigma_{\varphi,\lambda}(z_0) - \frac{2}{\eta}.
\end{split}
\]
The quantity
\[
\frac{2}{n} + \frac{2n}{\eta\pi(1-\delta)}  \sigma - \frac{2}{\eta}
\]
converges to
\[
\frac{2}{n} + \frac{2n}{\pi}  \sigma - 2
\]
for $(\eta,\delta) \rightarrow (1,0)$, uniformly in $\sigma\in [-\pi+\pi/n,\pi/n]$. Therefore, by choosing $\eta$ close enough to 1 and $\delta$ close enough to $0$ we obtain the bounds
\[
\begin{split}
\frac{2}{n} + \frac{2n}{\pi}  \sigma_{\varphi,\lambda}(z_0)  &- 2 - \nu^2 \leq \frac{\overline{\mu}(z_0,\varphi^k)}{k} \\ &\leq \frac{2}{n}  + \frac{2n}{\pi} \sigma_{\varphi,\lambda}(z_0) -2 + \nu^2
\quad \forall z_0 \in \Fix(\varphi^k) \cap A,
\end{split}
\]
concluding the proof of statement (vii). 

Let $w_0\in A_0$ be the point such that $f(w_0)=0$, where $f:A_0 \rightarrow \D_R$ is the area-preserving diffeomorphism introduced at the beginning of the proof. Being fixed by $\varphi_-$, the point $w_0$ is a fixed point of $\varphi^n$.  By \eqref{action_varphi_breakup}, \eqref{E:action_Hplus_interior}, \eqref{phikphimenos} and \eqref{E:action_K}
\[
\begin{split}
\sigma_{\varphi,\lambda}(w_0) &= \sigma_{\varphi_+,\lambda}(w_0) + \sigma_{\varphi_-,\lambda}(w_0) = \frac{\pi}{n} (1-\delta) + \sigma_{\varphi_K,\lambda_0}(0) \\ &= \frac{\pi}{n}(1-\delta) + \eta \frac{\pi}{n} \theta (1-\delta) = \frac{\pi}{n} (1-\delta)(1+\eta \theta) \\ &= \frac{\pi}{n} (1-\delta)(1-\eta n + \eta \nu).
\end{split}
\]
When $\eta$ converges to 1 and $\delta$ converges to 0, the latter quantity converges to
\[
\frac{\pi}{n} (1-n + \nu) = - \pi + \frac{\pi}{n}(1+\nu).
\]
Therefore, by choosing $\eta$ close enough to 1 and $\delta$ small enough we obtain
\[
\sigma_{\varphi,\lambda}(w_0) \leq - \pi + \frac{\pi}{n}(1+\nu) + \nu,
\]
as claimed in statement (viii). By \eqref{cz0}, \eqref{mu=mu} and \eqref{relind} we have for every $h\in \N$
\[
\mu(w_0,\varphi^{hn}) = 2h + \mu(w_0,\varphi^{hn}_-) = 2h + \mu(0,\varphi^{hn}_K) = 2h + 2\lceil hn\theta\rceil - 1.
\]
By dividing by $hn$ and by taking the limit for $h\rightarrow +\infty$ we find
\[
\frac{\overline{\mu}(w_0,\varphi^n)}{n} = \frac{2}{n} + 2\theta = \frac{2}{n} + 2 (-n + \nu) = \frac{2}{n} - 2n + 2\nu,
\]
which is the identity stated in statement (viii). This concludes the proof of Proposition \ref{mapn}

\section{Lifting from the disk to the three-sphere}
\label{secseconda}

\subsection{From the disk to the mapping torus}
\label{SS:disk_to_torus.}

In this section, we will lift a compactly supported area-preserving diffeomorphism of the disk to a Reeb flow on a solid torus. We recall the following statement from~\cite[Proposition 3.1]{abhs17b}, which we give here in a slightly less general form:

\begin{prop}
\label{prop_abhs15} 
Let $\varphi\in \mathrm{Diff}_c(\D,\omega_0)$, let $\lambda$ be a primitive of $\omega_0$ which agrees with $\lambda_0$ outside of a compact subset of the interior of $\D$. Assume that the function
\[
\tau:= \sigma_{\varphi,\lambda} + \pi
\]
is positive. Then there exists a smooth contact form $\beta$ on the solid torus $\D \times \R/\pi \Z$ with the following properties:
\begin{enumerate}[(i)]
\item $\beta = \lambda + ds$ in a neighborhood of $\partial \D \times \R/\pi \Z$, where $s$ denotes the coordinate on $\R/\pi \Z$; in particular, the Reeb vector field $R_{\beta}$ of $\beta$ coincides with $\partial/\partial s$ near the boundary of $\D \times \R/\pi \Z$, and its flow is globally well-defined;
\item for all $s\in \R/\pi \Z$ we have $(\imath_s)^* d\beta = \omega_0$, where $\imath_s: \D \rightarrow \D \times \R/\pi\Z$ denotes the inclusion $z\mapsto (z,s)$;
\item each surface $\D \times \{s\}$ is transverse to the flow of $R_{\beta}$, and the orbit of every point in $\D \times \R/\pi\Z$ intersects $\D \times \{s\}$ both in the future and in the past;
\item $\beta$ is smoothly isotopic to $\lambda + ds$ by a path of contact forms on $\D \times \R/\pi \Z$ which satisfy properties (i), (ii) and (iii) above;
\item the first return map and the first return time of the flow of $R_{\beta}$ associated to the surface $\D\times \{0\} \cong \D$ are the map $\varphi$ and the function $\tau$;
\item $\mathrm{vol}(\D \times \R/\pi \Z, \beta\wedge d\beta) = \pi^2 + \mathrm{CAL}(\varphi)$.
\end{enumerate}
\end{prop}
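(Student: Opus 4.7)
The plan is to realize $\beta$ as the canonical contact form on the mapping torus of $\varphi$ with the variable return-time function $\tau$. Set
\[
\tilde M := \{(z, s) \in \D \times \R : 0 \leq s \leq \tau(z)\},
\]
a smooth compact 3-manifold with boundary (well-defined thanks to $\tau > 0$), and let $M := \tilde M/\!\sim$ be the quotient by $(z, \tau(z)) \sim (\varphi(z), 0)$. On $\tilde M$, the 1-form $\beta_0 := \lambda + ds$ is automatically contact, because
\[
\beta_0 \wedge d\beta_0 = ds \wedge \omega_0
\]
is a positive volume form (using $\lambda \wedge \omega_0 = 0$ by dimension), with Reeb vector field $\partial_s$. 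The key observation is that $\beta_0$ descends smoothly to $M$: the local gluing diffeomorphism $F(z, s) := (\varphi(z), s - \tau(z))$ satisfies
\[
F^*(\lambda + ds) = \varphi^*\lambda + ds - d\tau = \lambda + d(\sigma_{\varphi, \lambda} - \tau) + ds = \lambda + ds,
\]
since $\sigma_{\varphi, \lambda} - \tau \equiv -\pi$. Let $\bar\beta$ denote the resulting contact form on $M$. Its Reeb orbit through $(z_0, 0) \in \tilde M$ flows along $\partial_s$ to the top at parameter time $\tau(z_0)$ and reappears at $(\varphi(z_0), 0)$, immediately yielding property (v).

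Next, transport $\bar\beta$ to the standard model $\D \times \R/\pi\Z$. Choose a smooth Hamiltonian isotopy $\{\psi^u\}_{u \in [0, 1]}$ in ${\rm Diff}_c(\D, \omega_0)$ from $\mathrm{id}$ to $\varphi$, supported in $\mathrm{int}(\D)$, and define
\[
\Psi : \D \times \R/\pi\Z \to M, \qquad \Psi(z, s) := \bigl[\bigl((\psi^{s/\pi})^{-1}(z),\; (s/\pi)\,\tau((\psi^{s/\pi})^{-1}(z))\bigr)\bigr].
\]
A short check confirms that $\Psi(z, 0) = [(z, 0)]$ and $\Psi(z, \pi) = [(\varphi^{-1}(z), \tau(\varphi^{-1}(z)))] = [(z, 0)]$ through the gluing, so $\Psi$ descends to a diffeomorphism. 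Setting $\beta := \Psi^*\bar\beta$ produces the desired form on $\D \times \R/\pi\Z$. Near $\partial \D$, the isotopy is trivial and $\tau \equiv \pi$, so $\Psi$ is the identity there and $\beta = \lambda + ds$, giving (i). The projection to $\D$ of $\Psi(\cdot, s)$ is the area-preserving map $(\psi^{s/\pi})^{-1}$, so $\iota_s^* d\beta = ((\psi^{s/\pi})^{-1})^*\omega_0 = \omega_0$, giving (ii). Transversality of $R_\beta = \Psi^*\partial_s$ to each slice is immediate, because the slice $\Psi(\D \times \{s\})$ is a graph $z' \mapsto (z', (s/\pi)\tau(z'))$ over $\D$ in $\tilde M$, to which $\partial_s$ is visibly transverse; compactness of $\D \times \R/\pi\Z$ then forces every orbit to hit each slice in both directions, giving (iii).

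Property (iv) follows by running the same construction with $\psi^{tu}$ in place of $\psi^u$ and $\psi^t$ in place of $\varphi$, for $t \in [0, 1]$, producing a smooth path from $\lambda + ds$ at $t = 0$ to $\beta$ at $t = 1$; positivity of the intermediate return-time functions $\tau_t := \pi + \sigma_{\psi^t, \lambda}$ can be ensured by reparametrizing the isotopy appropriately. Finally, (vi) is a direct computation:
\[
\mathrm{vol}(M, \bar\beta \wedge d\bar\beta) = \int_{\tilde M} ds \wedge \omega_0 = \int_\D \tau(z)\,\omega_0 = \pi^2 + \int_\D \sigma_{\varphi, \lambda}\,\omega_0 = \pi^2 + \Cal(\varphi).
\]
The main technical step is designing the diffeomorphism $\Psi$ so that it simultaneously respects the coordinate slices on $\D \times \R/\pi\Z$ and the mapping-torus identification on $M$; the two-parameter formula above accomplishes this precisely by conjugating with the isotopy $\psi$.
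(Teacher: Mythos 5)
The paper does not prove this proposition; it is quoted from \cite[Proposition 3.1]{abhs17b}, where $\beta$ is built directly on $\D\times\R/\pi\Z$ from a Hamiltonian isotopy. Your mapping-torus detour is a reasonable alternative skeleton, and the first half is correct: $\lambda+ds$ does descend to $M=\tilde M/\!\sim$ because $F^*(\lambda+ds)=\lambda+ds$, the Reeb field is $\partial_s$, and (v), (vi) follow immediately. The problem is the transport map $\Psi$. Near the seam $s=0\equiv\pi$ the natural chart on $M$ identifies a point $(w,u)$ with $u<0$ with $\bigl(\varphi^{-1}(w),\,u+\tau(\varphi^{-1}(w))\bigr)\in\tilde M$, and (taking the isotopy constant near its endpoints, which you must also impose) your map reads in this chart as $(z,\epsilon)\mapsto\bigl(z,(\epsilon/\pi)\tau(\varphi^{-1}(z))\bigr)$ for $\epsilon<0$ and $(z,\epsilon)\mapsto\bigl(z,(\epsilon/\pi)\tau(z)\bigr)$ for $\epsilon\geq 0$. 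The $s$-derivative jumps from $\tau(\varphi^{-1}(z))/\pi$ to $\tau(z)/\pi$, so $\Psi$ is a homeomorphism but not $C^1$ unless $\sigma_{\varphi,\lambda}$ happens to be $\varphi$-invariant (true for the maps of Proposition \ref{mapn}, but not assumed here); consequently $\Psi^*\bar\beta$ is discontinuous at the seam. This is fixable — replace the linear height $(s/\pi)\tau(\cdot)$ by a monotone interpolation whose germs at $s=0$ and $s=\pi$ match across the gluing, e.g.\ with slope $1$ at both ends — but carrying this out while keeping $\partial_s h>0$ is exactly where the hypothesis $\sigma_{\varphi,\lambda}>-\pi$ enters in an essential way, and it is the actual content of the proposition rather than a ``short check.''

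The second gap is in (iv). You propose to obtain the isotopy of contact forms by running the construction for $\psi^t$, $t\in[0,1]$, and assert that positivity of $\tau_t=\pi+\sigma_{\psi^t,\lambda}$ ``can be ensured by reparametrizing the isotopy.'' Reparametrizing in time does not change the set of intermediate diffeomorphisms $\{\psi^t\}$, hence not the set of action functions $\sigma_{\psi^t,\lambda}$; the hypothesis only controls $\sigma_{\varphi,\lambda}$, and an arbitrary isotopy from $\mathrm{id}$ to $\varphi$ may well have intermediate actions dipping below $-\pi$. One must either choose the isotopy so that all intermediate actions stay above $-\pi$ (which requires an argument) or, as in \cite{abhs17b}, interpolate the contact form itself in a way that keeps the coefficient of $ds$ positive throughout. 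Everything else — the contact condition on $\tilde M$, the verification of (ii) via area-preservation of the slice projections, the graph/transversality argument for (iii), and the volume computation for (vi) — is fine once $\Psi$ has been repaired.
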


Let $\varphi$ and $\lambda$ be as in the hypothesis of the above proposition. From this data we get a contact form $\beta$ on $\R/\pi\Z\times \D$ satisfying properties (i)-(vi). By (v), the closed orbits of the Reeb vector field $R_{\beta}$ are in one-to-one correspondence with the periodic points of $\varphi$. More precisely, if $z_0\in \D$ is a fixed point of $\varphi^k$ for some $k\in \N$ then by setting
\[
\gamma(t):= \phi_{R_{\beta}}^t(z_0,0), \qquad T:= \sum_{j=0}^{k-1} \tau(\varphi^j(z_0))
\]
we obtain a closed orbit $(\gamma,T)$ of $R_{\beta}$. Conversely, every closed orbit of  $R_{\beta}$ has this form, up to a time shift. Our aim now is to study the relationship between the Conley-Zehnder index of $z_0$ as a fixed point of $\varphi^k$ and the Conley-Zehnder index of the closed orbit $(\gamma,T)$.

First we need to clarify which trivialization of the contact structure $\ker \beta$ we are going to use to define the Conley-Zehnder index in the Reeb setting. For every $(z,s)\in \D\times \R/\pi\Z$ we denote by 
\[
\Pi_{(z,s)}: \R^3 = T_{(z,s)} (\D \times \R/\pi\Z) \rightarrow \ker \beta(z,s)
\]
the projection along the line spanned by $R_{\beta}(z,s)$. By Proposition \ref{prop_abhs15} (iii), the vector $R_{\beta}(z,s)$ is transverse to $\ker ds = \R^2 \times \{0\}$ and hence the composition
\[
\tilde{\Xi}_{(z,s)} : \R^2 \stackrel{{\rm id}\times 0}{\longrightarrow} \R^2 \times \{0\} = \ker ds \stackrel{\Pi_{(z,s)}}{\longrightarrow} \ker \beta(z,s)
\]
is an isomorphism. The fact that $R_{\beta}$ coincides with $\partial/\partial s$ near the boundary implies that this isomorphism is orientation preserving, and hence
\[
\tilde{\Xi}_{(z,s)}^* d\beta(z,s) = a(z,s)^2 \, \omega_0
\]
for some positive smooth function $a:  \D \times \R/\pi\Z \rightarrow \R$. Then the isomorphism
\begin{equation}
\label{Xibeta}
\Xi_{(z,s)} := a(z,s)^{-1} \,\tilde{\Xi}_{(z,s)} : \R^2 \longrightarrow \ker \beta(z,s)
\end{equation}
satisfies
\[
\Xi_{(z,s)}^* d\beta(z,s) = \omega_0,
\]
and hence $\Xi$ is a global symplectic trivialization of $\ker \beta$. Note that the inverse of this trivialization has the form
\[
\Xi_{(z,s)}^{-1} = a(z,s) \,\tilde{\Xi}_{(z,s)}^{-1} : \ker \beta(z,s) \longrightarrow \R^2,
\]
where $\tilde{\Xi}_{(z,s)}^{-1}$ is given by the composition
\[
\tilde{\Xi}_{(z,s)}^{-1}: \ker \beta(z,s) \stackrel{\hat{\Pi}_{(z,s)}}{\longrightarrow} 
\R^2 \times \{0\} \stackrel{p_1}{\longrightarrow} \R^2,
\]
where
\[
\hat{\Pi}_{(z,s)}: \R^3 = T_{(z,s)} (\D \times \R/\pi\Z) \rightarrow \R^2\times \{0\}
\]
is the projection along the line spanned by $R_{\beta}(z,s)$ and $p_1$ is the projection onto the first factor.

\begin{add}
\label{adde1}
Under the assumptions of Proposition \ref{prop_abhs15}, let $z_0\in \D$ be a fixed point of $\varphi^k$ for some $k\in \N$ and let $(\gamma,T)$ be the corresponding closed orbit of $R_{\beta}$, i.e. 
\[
\gamma(t):= \phi_{R_{\beta}}^t(z_0,0), \qquad T := \sum_{j=0}^{k-1} \tau(\varphi^j(z_0)).
\]
Then the Conley-Zehnder indices of the fixed point $z_0$ and of the closed orbit $(\gamma,T)$ coincide:
\[
\mu(z_0,\varphi^k) = \mu\bigl((\gamma,T),\Xi\bigr).
\]
\end{add}

\begin{proof} 
By Proposition \ref{prop_abhs15} (ii) the restriction of $\beta$ to each $\D\times \{s\}$ differs from the restriction of $\lambda$ by the differential of a function on $\D\times \{s\}$. It follows that
\[
\beta = \lambda + \partial_x f \, dx + \partial_y f\, dy + g\, ds = \lambda + df + h\, ds,
\]
where $f$ and $g$ are smooth real functions on $\D\times \R/\pi\Z$ taking the value 0 and 1, respectively, near the boundary $\partial \D\times \R/\pi\Z$, and $h:= g - \partial_s f$ has the value 1 near the boundary. We consider the following time dependent compactly supported Hamiltonian on the disk:
\[
H_t(z):= h(z,t) - 1,
\]
and consider the nowhere vanishing vector field $Y$ on $\D\times \R/\pi\Z$
\[
Y:= X_{H_s}(z) + \partial_s.
\]
We claim that $Y$ and $R_{\beta}$ are positively collinear. Indeed, this follows from the fact that these vector fields coincide near the boundary and from the following computation
\[
\begin{split}
\imath_Y d\beta =& \imath_{X_{H_s}+\partial_s} \bigl( \omega_0 + dh\wedge ds \bigr) = \imath_{X_{H_s}} \omega_0 + \imath_{ X_{H_s}} (dh\wedge ds) + \imath_{\partial_s} (dh\wedge ds) \\ &= dH_s + dh(X_{H_s}) \, ds + \partial_s h\, ds - dh \\ &= dh - \partial_s h\, ds+ dH_s(X_{H_s}) \, ds + \partial_s h\, ds - dh = 0.
\end{split}
\]
Therefore, the Reeb flow $\phi^t_{R_{\beta}}$ is a positive reparametrization of the flow $\phi^t_Y$ of $Y$:
\begin{equation}
\label{repara}
\phi^t_{R_{\beta}}(\zeta) = \phi^{\eta(t,\zeta)}_Y(\zeta) \qquad \forall (t,\zeta)\in \R \times ( \D \times \R/\pi\Z),
\end{equation}
for a suitable function $\eta=\eta(t,\zeta)$ which is strictly increasing in the first variable.
 Moreover, the form of $Y$ implies that the flow of $Y$ satisfies
\[
\phi_Y^t(z,0) = (\psi^t(z),t)
\]
where $\psi^t$ is the flow of the Hamiltonian vector field $X_{H_t}$ on $\D$. In particular, $\psi^t$ is a path in $\mathrm{Diff_c}(\D,\omega_0)$ such that $\psi^0=\mathrm{id}$. By Proposition \ref{prop_abhs15} (v), $\psi^1=\varphi$ and, more generally, $\psi^k = \varphi^k$ for all $n\in \N$.

Now let $z_0$ be a fixed point of $\varphi^k=\psi^k$ and let $(\gamma,T)$ be the corresponding closed orbit of $R_{\beta}$, as in the statement. In this case,
\begin{equation}
\label{time}
\eta(T,z_0,0) = k.
\end{equation}
The Conley-Zehnder index $\mu((\gamma,T),\Xi)$ is defined as the Conley-Zehnder index of the symplectic path
\[
\Phi: [0,1] \rightarrow \mathrm{Sp}(2), \qquad \Phi(t) := \Xi^{-1}_{\phi^{Tt}_{R_{\beta}}(z_0,0)} \circ d\phi_{R_{\beta}}^{Tt}(z_0,0) \circ \Xi_{(z_0,0)}.
\]
By differentiating (\ref{repara}) we find
\[
d\phi^t_{R_{\beta}}(z_0,0)[u] = d\phi^{\eta(t,z_0,0)}_Y(z_0,0)[u] + d\eta(t,z_0,0)[u] Y\bigl( \phi^{\eta(t,z_0,0)}_Y(z_0,0) \bigr)
\]
for all $u\in T_{(z_0,0)} (\D \times \R/\pi\Z) = \R^3$. Therefore, for every $v\in \R^2$ we have
\[
\begin{split}
\Phi(t) v =  \Xi_{\phi^{Tt}_{R_{\beta}}(z_0,0)}^{-1} \Bigl( &d\phi^{\eta(Tt,z_0,0)}_Y(z_0,0)[\Xi_{(z_0,0)} v] \\ &+ d\eta(Tt,z_0,0)[\Xi_{(z_0,0)} v] Y\bigl( \phi^{\eta(Tt,z_0,0)}_Y(z_0,0) \bigr) \Bigr)
\end{split}
\] 
By using the definition of $\Xi$ we find
\[
\begin{split}
\Phi(t) v =  a(\phi^{Tt}_{R_{\beta}}(z_0,0)) p_1 &\circ \hat{\Pi}_{\phi^{Tt}_{R_{\beta}}(z_0,0)} \Bigl( d\phi^{\eta(Tt,z_0,0)}_Y(z,0)[a(z_0,0)^{-1}\Pi_{(z_0,0)} (v,0)] \\ &+ d\eta(Tt,z_0,0)[\Xi_{(z_0,0)} v] Y\bigl( \phi^{\eta(Tt,z_0,0)}_Y(z_0,0) \bigr) \Bigr)
\end{split}
\] 
Using the fact that $Y$ is in the kernel of $\hat{\Pi}$ we can simplify this formula and get
\[
\Phi(t) v =  a(\phi^{Tt}_{R_{\beta}}(z_0,0)) a(z_0,0)^{-1} p_1\circ \hat{\Pi}_{\phi^{Tt}_{R_{\beta}}(z_0,0)} d\phi^{\eta(Tt,z_0,0)}_Y(z_0,0)[\Pi_{(z_0,0)} (v,0)].
\]
The vector $(\mathrm{id} - \Pi_{(z_0,0)}) (v,0)$ belongs to the line spanned by $Y(z_0,0)$, and hence $d\phi^{\eta(Tt,z_0,0)}(z_0,0)$ maps it into a multiple of $Y(\phi^{\eta(Tt,z_0,0)}(z_0,0))= Y(\phi^{Tt}_{R_{\beta}}(z_0,0))$, which is in the kernel of $\hat{\Pi}_{\phi^{Tt}_{R_{\beta}}(z_0,0)}$. Therefore, we can replace $\Pi_{(z_0,0)} (v,0)$ by $(v,0)$ in the above expression and obtain
\[
\begin{split}
\Phi(t) v &=  a(\phi^{Tt}_{R_{\beta}}(z_0,0)) a(z_0,0)^{-1} p_1\circ \hat{\Pi}_{\phi^{Tt}_{R_{\beta}}(z_0,0)} d\phi^{\eta(Tt,z_0,0)}_Y(z_0,0)[(v,0)] \\&= a(\phi^{Tt}_{R_{\beta}}(z_0,0)) a(z_0,0)^{-1} p_1 \circ \hat{\Pi}_{\phi^{Tt}_{R_{\beta}}(z_0,0)} (d\psi^{\eta(Tt,z_0,0)}(z_0)[v],0) \\ &=  a(\phi^{Tt}_{R_{\beta}}(z_0,0)) a(z_0,0)^{-1} d\psi^{\eta(Tt,z_0,0)}(z)[v],
\end{split}
\]
where we have used the properties of the projector $\hat{\Pi}$. The fact that both $\Phi(t)$ and $d\psi^{\eta(Tt,z_0,0)}(z_0)$ preserve $\omega_0$ implies that the product $a(\phi^{Tt}_{R_{\beta}}(z_0,0)) a(z_0,0)^{-1}$ is constantly equal to 1, and gives us the formula
\[
\Phi(t) v = d\psi^{\eta(Tt,z_0,0)}(z)[v].
\]
By (\ref{time}), the function $t\mapsto \eta(Tt,z_0,0)$ maps the interval $[0,1]$ monotonically increasing onto the interval $[0,k]$. Therefore, the Conley-Zehnder index of $\Phi$ coincides with the Conley-Zehnder index of the path
\[
[0,1] \rightarrow d\psi^{kt}(z_0).
\]
Since $t\mapsto \psi^{kt}$ is a path in $\mathrm{Diff_c}(\D,\omega_0)$ starting at the identity and ending at $\psi^k=\varphi^k$, the Conley-Zehnder index of the above path gives us the Conley-Zehnder index of $(z_0,\varphi^k)$. This concludes the proof.
\end{proof}

\subsection{From the mapping torus to the three-sphere}
\label{Sec:fromTtoS}

The standard contact form $\alpha_0$ on $S^3:= \{z\in \C^2 \mid |z|=1\}$ is the restriction of the 1-form on $\C^2$
\[
\frac{1}{2} \sum_{j=1}^2 ( x_j\, dy_j - y_j\, dx_j),
\]
where $(x_1+iy_1,x_2+iy_2)$ are the standard coordinates in $\C^2$. The corresponding Reeb vector field is 
\[
R_{\alpha_0}(z) = 2i z,
\]
and its orbits are precisely the intersections of $S^3$ with all complex lines. All these orbits are closed and have period $\pi$. We single out the particular orbit
\[
\Gamma := S^3 \cap (\C\times \{0\}) = S^1 \times \{0\},
\]
where $S^1$ denotes the unit circle in $\C$. We also consider the smooth family of closed disks in $S^3$:
\[
\Sigma_{\varsigma} := \{(z_1,z_2)\in S^3 \mid \mbox{either } z_2 = 0 \mbox{ or } z_2 \neq 0 \mbox{ and } \arg z_2=\varsigma\}, \quad \varsigma\in \R/2\pi \Z.
\]
These disks have the same boundary, namely the circle $\Gamma$. Their interiors $\Sigma_{\varsigma}\setminus \Gamma$ define a smooth foliation of $S^3\setminus \Gamma$ by open disks. We single out one of these discs
\[
\Sigma:= \Sigma_0 = \{ (x_1+i y_1 ,x_2+i y_2)\in S^3 \mid x_2 \geq 0, \; y_2=0\},
\]
and we parametrise it by the map
\begin{equation}
\label{themap}
u: \D \rightarrow \Sigma, \qquad u(x,y) := \Bigl(x+iy,\sqrt{1-x^2-y^2}\Bigr).
\end{equation}
This map is a homeomorphism, and its restriction to the interior of $\D$ is a smooth embedding into $S^3$. The following proposition is proved in \cite[Proposition 3.4]{abhs17b}.

\begin{prop}
\label{liftS3}
Let $\varphi\in \mathrm{Diff}_c(\D,\omega_0)$ and let $\lambda$ be a smooth primitive of $\omega_0$ agreeing with $\lambda_0$ outside of a compact subset of the interior of $\D$. Assume that the function
\[
\tau:= \sigma_{\varphi,\lambda} + \pi
\]
is positive. Then there exists a smooth contact form $\alpha$ on $S^3$ with the following properties:
\begin{enumerate}[(i)]
\item $\alpha$ coincides with $\alpha_0$ in a neighbourhood of $\Gamma$ in $S^3$, and in particular $\Gamma$ is a closed orbit of $R_{\alpha}$;
\item for every $\varsigma \in \R/2\pi \Z$, the restrictions of $d\alpha$ and of $d\alpha_0$ to $\Sigma_{\varsigma}$ coincide;
\item the flow of $R_{\alpha}$ is transverse to the interior of each $\Sigma_{\varsigma}$, and the orbit of every point in $S^3\setminus \Gamma$ intersects the interior of $\Sigma_{\varsigma}$ both in the future and in the past;
\item $\alpha$ is smoothly isotopic to $\alpha_0$ through a path of contact forms which satisfy the conditions (i), (ii) and (iii) above;
\item the first return map and the first return time associated to $\Sigma$ are the map $u\circ \varphi \circ u^{-1}$ and the function $\tau\circ u^{-1}$, where $u$ is the map defined in \eqref{themap}.
\item $\mathrm{vol}(S^3, \alpha\wedge d\alpha) = \pi^2 + \mathrm{CAL}(\varphi)$.
\end{enumerate}
\end{prop}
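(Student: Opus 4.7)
The plan is to construct $\alpha$ by first lifting $\varphi$ to a contact form on a solid torus via Proposition \ref{prop_abhs15}, and then transplanting the resulting solid torus into $S^3 \setminus \Gamma$ through an identification that is compatible with $\alpha_0$. I first apply Proposition \ref{prop_abhs15} to the data $(\varphi,\lambda)$, obtaining a smooth contact form $\beta$ on $\D \times \R/\pi\Z$ satisfying properties (i)--(vi) of that proposition; in particular $\beta = \lambda_0 + ds$ in a neighborhood of $\partial \D \times \R/\pi\Z$, since by hypothesis $\lambda = \lambda_0$ there. The transplantation is carried out by the map
\[
F : \D \times \R/\pi\Z \longrightarrow S^3, \qquad F(z,s) := \phi^s_{R_{\alpha_0}}(u(z)) = e^{2is}\, u(z),
\]
which restricts to a diffeomorphism from $\mathrm{int}(\D) \times \R/\pi\Z$ onto $S^3 \setminus \Gamma$ and collapses $\partial\D \times \R/\pi\Z$ submersively onto $\Gamma$.

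The technical heart of the argument is the identity $F^*\alpha_0 = \lambda_0 + ds$. To establish it I first compute $u^*\alpha_0 = \lambda_0$ on $\D$ directly from the explicit formula $u(x,y) = (x+iy,\sqrt{1-x^2-y^2})$, exploiting that $y_2 \equiv 0$ on $\Sigma$. Since the Reeb flow $\phi^s_{R_{\alpha_0}}$ preserves $\alpha_0$, the pullback along the horizontal directions $\partial_x, \partial_y$ at any $(z,s)$ equals $u^*\alpha_0(\partial_x)$ and $u^*\alpha_0(\partial_y)$, hence matches $\lambda_0$; and along $\partial_s$ one has $\alpha_0(dF\cdot \partial_s) = \alpha_0(R_{\alpha_0}) = 1 = ds(\partial_s)$. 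Together these identifications give the claimed identity.

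With this in place I define $\alpha$ on $S^3$ by pushing $\beta$ forward through $F|_{\mathrm{int}(\D) \times \R/\pi\Z}$ on $S^3 \setminus \Gamma$, and setting $\alpha := \alpha_0$ on a neighborhood of $\Gamma$. The two definitions agree on the overlap: since $\beta = \lambda_0 + ds$ near $\partial\D \times \R/\pi\Z$, its pushforward equals $\alpha_0$ on a punctured neighborhood of $\Gamma$ by the identity $F^*\alpha_0 = \lambda_0 + ds$. Hence $\alpha$ is a smooth contact form on $S^3$, and property (i) holds by construction. Property (ii) reduces to observing that $F$ sends $\D \times \{\varsigma/2\}$ diffeomorphically onto $\Sigma_\varsigma$, so $(F|_{\D\times\{\varsigma/2\}})^* d\alpha = d\beta|_{\D\times\{\varsigma/2\}} = \omega_0 = (F|_{\D\times\{\varsigma/2\}})^* d\alpha_0$. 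Property (iii) is the pushforward of Proposition \ref{prop_abhs15}(iii), combined with the easy transversality of $R_{\alpha_0}$ to $\mathrm{int}(\Sigma_\varsigma)$ near $\Gamma$. Property (v) is a direct translation of Proposition \ref{prop_abhs15}(v) under $u:\D\to\Sigma$, and property (vi) follows because $F$ is a diffeomorphism off a set of Lebesgue measure zero. Finally, property (iv) is obtained by pushing forward the isotopy provided by Proposition \ref{prop_abhs15}(iv), noting that all intermediate contact forms equal $\lambda_0 + ds$ near $\partial\D$, and so their pushforwards agree with $\alpha_0$ near $\Gamma$ throughout the isotopy.

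The main obstacle I expect to face is precisely the identity $F^*\alpha_0 = \lambda_0 + ds$: without it the boundary behavior of $\beta$ would fail to match $\alpha_0$ near $\Gamma$, and the pushforward would not extend smoothly across $\Gamma$. Once this identification is secured, all remaining verifications amount to routine bookkeeping translating the solid-torus statements of Proposition \ref{prop_abhs15} into the three-sphere.
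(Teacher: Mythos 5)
Your proposal is correct and is essentially the paper's own proof: your map $F(z,s)=\phi^s_{R_{\alpha_0}}(u(z))$ is precisely the diffeomorphism $f(re^{i\theta},s)=\bigl(re^{i(\theta+2s)},\sqrt{1-r^2}\,e^{2is}\bigr)$ that the paper uses to transplant the solid torus of Proposition \ref{prop_abhs15} into $S^3\setminus\Gamma$, and the gluing across $\Gamma$ rests on the same key identity $F^*\alpha_0=\lambda_0+ds$. The only point left slightly implicit is that the isotopy from Proposition \ref{prop_abhs15}(iv) ends at $\lambda+ds$ rather than $\lambda_0+ds$, so to land exactly on $\alpha_0$ one should append the linear isotopy through $\alpha_0+t\,dv$ (where $dv$ is the pushforward of the exact form $\lambda-\lambda_0$), which remains contact because the corresponding volume forms are convex combinations of positive ones.
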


Indeed, this proposition is a direct consequence of Proposition \ref{prop_abhs15}: After lifting the map $\varphi$ to a contact form $\beta$ on the solid torus $\D\times \R/\pi \Z$ by Proposition \ref{prop_abhs15}, one defines $\alpha$ on $S^3\setminus \Gamma$ as
\[
\alpha := (f^{-1})^* \beta,
\]
where 
\[
f:\mathrm{int}(\D) \times \R/\pi \Z  \to S^3\setminus \Gamma
\]
is the smooth diffeomorphism
\begin{equation}\label{map_f}
f(re^{i\theta},s) = \left( re^{i(\theta+2s)} , \sqrt{1-r^2}e^{i2s} \right).
\end{equation}
This diffeomorphism has a continuous extension to $\D \times \R/\pi \Z$, mapping $\partial \D \times \R/\pi \Z$ onto $\Gamma$. The fact that $\beta=\lambda_0 + ds$ near the boundary of the solid torus and the formula
\[
f^*(\alpha_0) = \lambda_0+ds
\]
imply that $\alpha$ smoothly extends to a contact form - again denoted by $\alpha$ - on the whole $S^3$  coinciding with $\alpha_0$ near $\Gamma$, which is then a closed orbit of $R_{\alpha}$. 

The closed orbits of $R_{\alpha}$ in the complement of the circle $\Gamma$ are in one-to-one correspondence with interior periodic points of the map $\varphi$. More precisely, a fixed point $z_0\in \mathrm{int}(\D)$ of $\varphi^k$ determines the closed orbit $(\gamma,T)$ of $R_{\alpha}$ which is defined by
\begin{equation}
\label{corre2}
\gamma(t):= \phi_{R_{\alpha}}^t(u(z_0)), \qquad T := \sum_{j=0}^{k-1} \tau(\varphi^j(z_0)),
\end{equation}
where $u:\D \rightarrow S^3$ is the map defined in (\ref{themap}), and all closed orbits of $R_{\alpha}$ other than $\Gamma$ are obtained in this way, up to a time shift. Now we wish to complement the above proposition with information about the Conley-Zehnder index of these Reeb orbits. 

We recall that the contact structure $\ker \alpha_0$ has the standard global trivialization $\Xi^{\alpha_0}$ which is defined as
\begin{equation}
\label{trivalpha0}
\Xi^{\alpha_0}_{(z_1,z_2)} : \R^2 \to \ker \alpha_0(z_1,z_2), \qquad 
 (x,y) \mapsto x (-\overline{z}_2,\overline{z}_1) + y (-i\overline{z}_2,i\overline{z}_1), 
\end{equation}
for all $(z_1,z_2)\in S^3 \subset \C^2$ and $(x,y)\in \R^2$.
Indeed, $\ker \alpha_0(z_1,z_2)$ is the complex line which is orthogonal to the complex line spanned by $(z_1,z_2)$ with respect to the standard Hermitian product on $\C^2$, and the above trivialization corresponds to the trivialization
\[
(x,y) \mapsto x \, j \cdot \zeta + y \, k \cdot \zeta,
\]
once $\C^2$ is identified with the field of quaternions by setting 
\[\
\zeta:= \re z_1 + (\im z_1)\,  i + (\re z_2)\,  j + (\im z_2)\, k.
\]
The contact form $\alpha$ constructed in Proposition \ref{liftS3} is isotopic to $\alpha_0$ by a path of contact forms, so by Gray theorem the contact structure $\ker \alpha$ is the image of the contact structure $\ker \alpha_0$ by a diffeomorphism of $S^3$ which is isotopic to the identity. By applying this diffeomorphism to $\Xi^{\alpha_0}$ we obtain a global trivialization $\Xi^{\alpha}$ of $\ker \alpha$, which is uniquely defined up to isotopy. The Conley-Zehnder indices of closed orbits of $R_{\alpha}$ refer to this trivialization.

\begin{add}
\label{adde2}
Under the assumptions of Proposition \ref{liftS3}, let $z_0\in \mathrm{int}(\D)$ be a fixed point of $\varphi^k$ for some $k\in \N$ and let $(\gamma,T)$ be the corresponding closed orbit of $R_{\alpha}$ on $S^3$ as in (\ref{corre2}). Then the Conley-Zehnder indices of the fixed point $z_0$ and of the closed orbit $(\gamma,T)$ are related by the identity
\[
\mu\bigl((\gamma,T),\Xi^{\alpha} \bigr) = \mu(z_0,\varphi^k) +4k.
\]
\end{add}  

\begin{proof}
Denote by $\beta$ the contact form on $\D\times \R/\pi \Z$ which is given by Proposition \ref{prop_abhs15} and by $(\tilde{\gamma},T)$ be the closed orbit of $R_{\beta}$ which corresponds to $z_0$, that is,
\[
\tilde{\gamma}(t) := \phi^t_{R_{\beta}}(z_0,0).
\]
Then $\gamma= f\circ \tilde{\gamma}$. In Addendum \ref{adde1} we have proved that
\begin{equation}
\label{m1}
\mu(z_0,\varphi^k) = \mu\bigl((\tilde{\gamma},T),\Xi^{\beta} \bigr),
\end{equation}
where $\Xi^{\beta}$ is the trivialization of $\ker \beta$ which is defined in (\ref{Xibeta}) under the name of $\Xi$. By pulling back the trivialization $\Xi^{\alpha}$ of $\ker \alpha$ by the diffeomorphism $f: \mathrm{int}(\D) \times \R/\pi \Z \rightarrow S^3 \setminus \Gamma$ we obtain another trivialization of $\ker \beta$, which we denote by $f^* \Xi^{\alpha}$, and
\begin{equation}
\label{m2}
\mu\bigl((\gamma,T),\Xi^{\alpha} \bigr) = \mu\bigl((\tilde{\gamma},T),f^*\Xi^{\alpha} \bigr).
\end{equation}
From (\ref{chtriv}) we know that
\begin{equation}
\label{m3}
\mu\bigl((\tilde{\gamma},T),f^*\Xi^{\alpha} \bigr) = \mu\bigl((\tilde{\gamma},T),\Xi^{\beta} \bigr) + 2 \, \mathrm{Maslov} (\Psi),
\end{equation}
where $\Psi: \R/\Z \rightarrow \mathrm{Sp}(2)$ is the loop in the symplectic group
\[
\Psi(t) := (f^*\Xi^{\alpha})_{\tilde{\gamma}(Tt)}^{-1} \circ \Xi^{\beta}_{\tilde{\gamma(Tt)}}.
\]
We claim that the loop $\Psi$ is freely homotopic to the loop
\[
\Psi_1(t) := (f^*\Xi^{\alpha_0})_{\tilde{\gamma}(Tt)}^{-1} \circ \Xi^{\beta_0}_{\tilde{\gamma(Tt)}},
\]
where $\Xi^{\alpha_0}$ is the standard symplectic trivialization of $\ker \alpha_0$, see (\ref{trivalpha0}), and $\Xi^{\beta_0}$ is the symplectic trivialization of $\ker{\beta_0}$ given by (\ref{Xibeta}) in the special case in which $\beta$ is the contact form
\[
\beta_0:=\lambda_0 + ds,
\]
$\lambda_0$ being the rotationally invariant primitive of $\omega_0$ from \eqref{omegalambda}. Indeed, by Proposition \ref{prop_abhs15} (iv), the contact form $\beta$ is smoothly isotopic relative to a neighborhood of the boundary and through contact forms to $\lambda+ds$, and the Reeb vector fields of the contact forms forming this isotopy are transverse to $\ker ds$. Since $\lambda-\lambda_0$ is exact, we can easily prolongate this isotopy to the contact form $\beta_0=\lambda_0+ds$, by keeping all the above properties. Denote by $\{\beta_r\}_{r\in [0,1]}$, the resulting smooth isotopy of contact forms from $\beta_0$ to $\beta$ having the above properties. The fact that $R_{\beta_r}$ is transverse to $\ker ds$ allows us to define a smooth family of symplectic trivializations $\Xi^{\beta_r}$ of $\ker \beta_r$ as in \eqref{Xibeta}. This path of trivializations joins $\Xi^{\beta_0}$ to $\Xi^{\beta}$.

The 1-form $\beta_r$ is the pull-back by $f$ of a uniquely defined smooth 1-form $\alpha_r$ on $S^3$, and $\{\alpha_r\}_{r\in [0,1]}$ is a path of contact forms joining $\alpha_0$ to $\alpha$. The symplectic trivialization $\Xi^{\alpha_0}$ and Gray theorem define a smooth family of symplectic trivializations $\Xi^{\alpha_r}$ of $\ker \alpha_r$ joining $\Xi^{\alpha_0}$ to $\Xi^{\alpha}$. Then 
\[
(r,t) \mapsto (f^*\Xi^{\alpha_r})_{\tilde{\gamma}(Tt)}^{-1} \circ \Xi^{\beta_r}_{\tilde{\gamma(Tt)}}
\]
is the required free homotopy between $\Psi_1$ and $\Psi$. This concludes the proof of the claim.

Since the loop 
\[
\R/\Z \to \mathrm{int}(\D) \times \R/\pi \Z, \qquad t\mapsto \tilde{\gamma}(Tt),
\]
is freely homotopic to the loop
\[
\R/\Z \to \mathrm{int}(\D) \times \R/\pi \Z, \qquad t\mapsto (0,k \pi t),
\]
the loop $\Psi_1$ is freely homotopic to the loop
\[
\Psi_0:\R/\Z \rightarrow \mathrm{Sp}(2), \qquad t\mapsto (f^*\Xi^{\alpha_0})_{(0,k\pi t)}^{-1} \circ \Xi^{\beta_0}_{(0,k\pi t)},
\]
which we are now going to determine. Since 
\[
\ker \beta_0(0,s) = \ker ds = \R^2 \times \{0\},
\]
we deduce from (\ref{Xibeta}) that
\[
\Xi^{\beta_0}_{(0,s)} (x,y) = (x,y), \qquad \forall s\in \R/\pi \Z, \; (x,y)\in \R^2.
\]
The differential of $f$ at $(0,s)$ is 
\[
df(0,s)[(\zeta,\sigma)] = \bigl( \zeta e^{2is}, 2i e^{2is} \sigma), \quad  \forall (\zeta,\sigma)\in T_{(0,s)} (\D \times \R/\pi \Z) = \R^2 \times \R,
\]
and its inverse is
\[
df(0,s)^{-1}[(\zeta_1,\zeta_2)] = \Bigl(e^{-2is} \zeta_1, - \frac{i}{2} e^{-2is} \zeta_2 \Bigr), \quad  \forall (\zeta_1,\zeta_2)\in T_{f(0,s)} S^3 \subset \C^2.
\]
From the above identity and from the expression (\ref{trivalpha0}) for $\Xi^{\alpha_0}$ we find for every $s\in \R/\pi \Z$ and $(x,y)\in \R^2$
\[
\begin{split}
(f^*\Xi^{\alpha_0})_{(0,s)} (x,y) &= df(0,s)^{-1} \circ \Xi^{\alpha_0}_{f(0,s)} (x,y) = df(0,s)^{-1} \circ \Xi^{\alpha_0}_{(0,e^{2is})} (x,y)\\ &= df(0,s)^{-1} \bigl( x (-e^{-2 i s},0) + y (-i e^{-2i s},0) \bigr) \\ &= ( - e^{-4is} (x+iy),0),
\end{split}
\]
and hence
\[
(f^*\Xi^{\alpha_0})_{(0,s)}^{-1} (x+iy,0) = -e^{4is} (x+iy),
\]
where we are identifying the symplectic plane $\R^2$ with $\C$.
Therefore, the loop $\Psi_0: \R/\Z \rightarrow \mathrm{Sp}(2)$ has the form
\[
\Psi_0(t) = - e^{4\pi i k t}.
\]
This loop has Maslov index
\[
\mathrm{Maslov}(\Psi_0) = 2k.
\]
Therefore, also the freely homotopic loop $\Psi$ has Maslov index $2k$, and the identities (\ref{m1}), (\ref{m2}) and (\ref{m3}) imply
\[
\mu\bigl( (\gamma,T), \Xi^{\alpha} \bigr) = \mu(z_0,\varphi^k) + 4k.
\]
\end{proof}

\subsection{Proof of Theorem \ref{T:main_DC}}

Given $\epsilon>0$, let $\lambda$ be the primitive of $\omega_0$ and $\varphi$ the map in ${\rm Diff_c}(\D,\omega_0)$ whose existence is established in Proposition \ref{map2}. Since $\lambda=\lambda_0$ near the boundary of $\D$, we can lift the data $(\lambda,\varphi)$ to $S^3$ and obtain a contact form which satisfies all the requirements of Proposition \ref{liftS3} and Addendum \ref{adde2}. Being isotopic to $\alpha_0$ through contact forms (Proposition \ref{liftS3} (iv)), $\alpha$ is tight. Our aim is to show that the systolic ratio of $\alpha$ belongs to the interval $(2-\epsilon,2)$ and that $\alpha$ is dynamically convex, thus proving Theorem \ref{T:main_DC}.

We claim that $T_{\min}(\alpha)=\pi$. Indeed, the binding orbit $\Gamma$ has period $\pi$, so it is enough to show that all other closed orbits have period not smaller than $\pi$. By Proposition \ref{liftS3} (v), the closed orbits $(\gamma,T)$ of $\alpha$ are in one-to-one correspondence with the periodic points of $\varphi$ and we have
\[
z_0\in \mathrm{Fix}(\varphi^k) \quad \Rightarrow \quad T= \sum_{j=0}^{k-1} \bigl( \pi + \sigma_{\varphi,\lambda}(\varphi^j(z_0))\bigr) = k \pi + \sum_{j=0}^{k-1} \sigma_{\varphi,\lambda}(\varphi^j(z_0)).
\]
If $k=1$, then $z_0$ is a fixed point of $\varphi$ and we have $\sigma_{\varphi,\lambda}(z_0)\geq 0$ by Proposition \ref{map2} (i), so $T\geq \pi$. If $k\geq 2$, the lower bound $\sigma_{\varphi,\lambda}>-\pi/2$, also proved in  Proposition \ref{map2} (i), implies 
\[
T \geq k \pi - k \cdot \frac{\pi}{2} = k\cdot \frac{\pi}{2} \geq \pi.
\]
This shows that $T_{\min}(\alpha)=\pi$. 

By Proposition \ref{map2} (ii), the Calabi invariant of $\varphi$ belongs to the interval $(-\pi^2/2,-\pi^2/2+\epsilon)$. Then Proposition \ref{liftS3} implies the bounds
\[
\frac{\pi^2}{2} < \mathrm{vol}(S^3,\alpha\wedge d\alpha) = \pi^2 + \mathrm{CAL}(\varphi) < \frac{\pi^2}{2}  + \epsilon.
\]
Therefore, the systolic ratio of $\alpha$ has the bounds
\[
2-\epsilon < 2 \left( 1- \frac{2\epsilon}{\pi^2} \right) < \frac{2}{1+\frac{2\epsilon}{\pi^2}} = \frac{\pi^2}{\frac{\pi^2}{2} + \epsilon} < \rho(\alpha) = \frac{\pi^2}{\mathrm{vol}(S^3,\alpha\wedge d\alpha)} < 2,
\]
as we wished to show.

There remains to check that $\alpha$ is dynamically convex, that is, that the Conley-Zehnder index of every closed orbit $(\gamma,T)$ of $R_{\alpha}$ satisfies
\[
\mu\bigl( (\gamma,T), \Xi^{\alpha} \bigr) \geq 3.
\] 
The binding orbit $\Gamma$ has Conley-Zehnder index 3, and its $k$-th iterate has  Conley-Zehnder index $4k-1\geq 3$. Let $(\gamma,T)$ be another closed orbit of $R_{\alpha}$, corresponding to the fixed point $z_0$ of $\varphi^k$. Then Addendum \ref{adde2} gives us
\[
\mu\bigl( (\gamma,T), \Xi^{\alpha} \bigr) = \mu(z_0,\varphi^k) + 4k.
\]
If $k=1$, then $z_0$ is a fixed point of $\varphi$ and $\mu(z_0,\varphi)\geq -1$ by Proposition \ref{map2} (iii), so in this case
\[
\mu\bigl( (\gamma,T), \Xi^{\alpha} \bigr) \geq -1 + 4 = 3.
\]
If $k\geq 2$, then Proposition \ref{map2} (iii) tells us that $\mu(z_0,\varphi^k)\geq 1-3k$ and hence
\[
\mu\bigl( (\gamma,T), \Xi^{\alpha} \bigr) \geq 1-3k + 4k = 1+k\geq 3.
\]
We conclude that $\alpha$ is dynamically convex.

\subsection{Proof of Theorem \ref{thm0}}

Here the real number $\epsilon>0$ and the natural number $n\geq 2$ are given. Fix some small positive number $\epsilon'$. The size of $\epsilon'$ depends on $\epsilon$ and $n$ and will be determined along the way. 
Let $\lambda$ be the primitive of $\omega_0$ and let $\varphi$ be the map in ${\rm Diff_c}(\D,\omega_0)$ which are given by Proposition \ref{mapn} applied to the pair $(n,\epsilon')$. Since
\[
\sigma_{\varphi,\lambda} > -\pi + \frac{\pi}{n} > -\pi,
\]
by Proposition \ref{mapn} (i), Proposition \ref{liftS3} allows us to lift $(\lambda,\varphi)$ to $S^3$ and obtain a contact form $\alpha$ on $S^3$ which satisfies all the requirements listed in Proposition \ref{liftS3} and Addendum \ref{adde2}. Being isotopic to $\alpha_0$ through contact forms (Proposition \ref{liftS3} (iv)), $\alpha$ is tight.

We claim that $T_{\min}(\alpha)=\pi$. The binding orbit $\Gamma$ has period $\pi$, so it is enough to check that all the other closed orbits $(\gamma,T)$ satisfy $T\geq \pi$. Any such orbit corresponds to a fixed point $z_0$ of $\varphi^k$, for some $k\in \N$, and
\begin{equation}
\label{periodo}
T = k \pi + \sum_{j=0}^{k-1} \sigma_{\varphi,\lambda}(\varphi^j(z_0)) = k \bigl( \pi + \sigma_{\varphi,\lambda}(z_0) \bigr),
\end{equation}
where we have used the fact that the action $\sigma_{\varphi,\lambda}$ is $\varphi$-invariant by Proposition \ref{mapn} (iv).
If $z_0$ is a fixed point of $\varphi$, then $\sigma_{\varphi,\lambda}(z_0)\geq 0$ by Proposition \ref{mapn} (i) and hence
\[
T \geq k \pi \geq \pi.
\]
If $z_0$ is not a fixed point of $\varphi$, then $k\geq n$ by Proposition \ref{mapn} (v). Then the lower bound for the action  which is established in Proposition \ref{mapn} (i) implies
\[
T \geq k \left( \pi   - \pi + \frac{\pi}{n} \right) = \frac{k}{n} \pi \geq \pi.
\]
We conclude that $T_{\min}(\alpha)=\pi$.

The bounds on the Calabi invariant of $\varphi$ wich are established in Proposition \ref{mapn} (ii)
and Proposition \ref{liftS3} imply the volume bounds
\[
\frac{\pi^2}{n} < \mathrm{vol}(S^3,\alpha\wedge d\alpha) = \pi^2 + \mathrm{CAL}(\varphi) < \frac{\pi^2}{n}  + \epsilon'.
\]
Therefore, if $\epsilon'$ is small enough then the systolic ratio of $\alpha$ has the bounds
\[
n-\epsilon < \frac{n}{1+\frac{n\epsilon'}{\pi^2}} = \frac{\pi^2}{\frac{\pi^2}{n} + \epsilon'} < \rho(\alpha) = \frac{\pi^2}{\mathrm{vol}(S^3,\alpha\wedge d\alpha)} < n,
\]
as we wished to prove. 

There remains to estimate the mean rotation number of all closed orbits of $R_{\alpha}$. The Conley-Zehnder index of the $k$-th iterate of the binding orbit $\Gamma$ is given by
\[
\mu\bigl( (\Gamma,k\pi), \Xi^{\alpha} \bigr) = 4k-1,
\]
and hence
\begin{equation}
\label{r1}
\overline{\rho}(\Gamma) = \lim_{k\rightarrow +\infty} \frac{\mu\bigl( (\Gamma,k\pi), \Xi^{\alpha} \bigr)}{2k\pi} = \frac{2}{\pi}.
\end{equation}
Let $(\gamma,T)$ be another closed orbit of $R_{\alpha}$. Then $(\gamma,T)$ corresponds to some $z_0\in \Fix(\varphi^k)\cap \mathrm{int}(\D)$ with $k\in \N$, and the period $T$ is given by (\ref{periodo}). By Addendum \ref{adde2} we have for every $h\in \N$
\[
\begin{split}
\frac{\mu\bigl((\gamma,hT),\Xi^{\alpha}\bigr)}{2hT} &= \frac{4hk+\mu(z_0,\varphi^{hk})}{2h \bigl(k\pi + k\sigma_{\varphi,\lambda}(z_0) \bigr)} \\ &= \frac{2}{\pi + \sigma_{\varphi,\lambda}(z_0)} + \frac{1}{2k\bigl( \pi + \sigma_{\varphi,\lambda}(z_0)\bigr)} \frac{\mu(z_0,\varphi^{hk})}{h},
\end{split}
\]
and, by taking a limit for $h\rightarrow +\infty$, we obtain
\begin{equation}
\label{formularho}
\overline{\rho}(\gamma) = \frac{1}{\pi + \sigma_{\varphi,\lambda}(z_0)} \left( 2 + \frac{\overline{\mu}(z_0,\varphi^k)}{2k} \right).
\end{equation}
In the following we set for simplicity
\[
\sigma:= \sigma_{\varphi,\lambda}(z_0).
\]
Assume that the fixed point $z_0$ of $\varphi^k$ does not belong to the $\varphi$-invariant set $A$. Then the upper bound on $\overline{\mu}(z_0,\varphi^k)$ from Proposition \ref{mapn} (vi) implies
\[
\overline{\rho}(\gamma)  \leq \frac{1}{\pi + \sigma} \left( 2 + \frac{\sigma}{\pi} + \frac{\epsilon' \sigma}{2} \right) = \frac{1}{\pi} + \frac{\epsilon'}{2} + \frac{\frac{1}{\pi} - \frac{\epsilon'}{2}}{1+ \frac{\sigma}{\pi}}.
\]
The fact that $z_0$ is not in $A$ implies that $\sigma\geq 0$, see again Proposition \ref{mapn} (vi), and hence we obtain the upper bound
\begin{equation}
\label{r2}
\overline{\rho}(\gamma)  \leq \frac{2}{\pi} \qquad \mbox{if } z_0\notin A.
\end{equation}
Similarly, the lower bound on $\overline{\mu}(z_0,\varphi^k)$ from Proposition \ref{mapn} (vi) gives  us
\[
\overline{\rho}(\gamma)  \geq \frac{1}{\pi+\sigma} \left( 2 + \frac{\sigma}{\pi} \right) = \frac{1}{\pi} + \frac{1}{\pi + \sigma},
\]
and the upper bound $\sigma<\pi/n$ from Proposition \ref{mapn} (i) implies
\begin{equation}
\label{r3}
\overline{\rho}(\gamma) \geq \frac{1}{\pi} \left( 2 - \frac{1}{n+1} \right) \qquad \mbox{if } z_0\notin A.
\end{equation}
Now we assume that $z_0$ belongs to $A$. In this case, the identity \eqref{formularho} and the upper bound on $\overline{\mu}(z_0,\varphi^k)$ from Proposition \ref{mapn} (vii) imply
\[
\overline{\rho}(\gamma)  \leq \frac{1}{\pi + \sigma} \left( 2 + \frac{1}{n} + \frac{n}{\pi} \sigma -1 + \frac{\nu^2}{2} \right) = \frac{n}{\pi} + \frac{1}{\pi+\sigma} \left( -n + 1 + \frac{1}{n} + \frac{\nu^2}{2} \right),
\]
where $\nu$ is a suitable number in the interval $(0,\epsilon')$.
Since $n\geq 2$, choosing $\epsilon'<1$ makes the quantity between parentheses negative, 
and the upper bound on $\sigma$ from Proposition \ref{mapn} (i) gives us
\[
\overline{\rho}(\gamma)  \leq \frac{n}{\pi} + \frac{1}{\pi+\frac{\pi}{n}} \left( -n + 1 + \frac{1}{n} + \frac{\nu^2}{2} \right) = \frac{1}{\pi} \left( 2 - \frac{1}{n+1} + \frac{n}{2(n+1)}\nu^2\right).
\]
By choosing $\epsilon'$ and hence $\nu$ small enough we obtain
\begin{equation}
\label{r4}
\overline{\rho}(\gamma)  < \frac{2}{\pi} \qquad \mbox{if } z_0\in A.
\end{equation}

By the lower bound on $\overline{\mu}(z_0,\varphi^k)$ from Proposition \ref{mapn} (vii) the identity \eqref{formularho} implies
\[
\overline{\rho}(\gamma) \geq \frac{1}{\pi + \sigma} \left( 2 + \frac{1}{n} + \frac{n}{\pi} \sigma -1 - \frac{\nu^2}{2} \right) = \frac{n}{\pi} + \frac{1}{\pi+\sigma} \left( -n + 1 + \frac{1}{n} - \frac{\nu^2}{2} \right).
\]
The fact that the quantity between parenthesis is negative and the lower bound on $\sigma$ from Proposition \ref{mapn} (vii) give us
\[
\begin{split}
\overline{\rho}(\gamma)  &\geq \frac{n}{\pi} + \frac{n}{\pi (1+\nu)} \left( -n + 1 + \frac{1}{n} - \frac{\nu^2}{2} \right) \\ 
&= \frac{-(n-1)^2+2}{\pi(1+\nu)} + \frac{n}{\pi(1+\nu)} \left( \nu - \frac{\nu^2}{2} \right).
\end{split}
\]
When $n\geq 3$ the numerator of the first fraction in the latter expression is negative, so this fraction is larger than $-(n-1)^2+2$ and, by choosing $\epsilon'$ and hence $\nu$ so small that the term $\nu-\nu^2/2$ is positive, we obtain
\[
\overline{\rho}(\gamma) \geq \frac{1}{\pi} \bigl( -(n-1)^2+2  + \delta \bigr),
\]
for some positive number $\delta=\delta(\epsilon')$ which is independent of $z_0\in A$.
When $n=2$ we have
\[
\overline{\rho}(\gamma) \geq \frac{1}{\pi(1+\nu)} + \frac{2}{\pi(1+\nu)} \left( \nu - \frac{\nu^2}{2} \right) = \frac{1}{\pi}  + \frac{1}{\pi(1+\nu)} (\nu-\nu^2),
\]
and, by choosing $\epsilon'$ and hence $\nu$ so small that the term $\nu-\nu^2$ is positive, we obtain
\[
\overline{\rho}(\gamma) \geq \frac{1}{\pi} (1+\delta)= \frac{1}{\pi} \bigl( -(n-1)^2+2 + \delta\bigr)
\]
for some positive number $\delta=\delta(\epsilon')$ which is independent of $z_0\in A$.
Therefore, in either cases we have
\begin{equation}
\label{r5}
\overline{\rho}(\gamma) \geq \frac{1}{\pi} \left( - (n-1)^2 + 2 + \delta \right) \qquad \mbox{if } z_0\in A,
\end{equation}
where the positive number $\delta=\delta(\epsilon')$ is independent of $z_0\in A$.

Finally, we estimate the mean rotation number of the special closed orbit $(\gamma_0,T_0)$ which 
corresponds to the fixed point $w_0\in A$ of $\varphi^n$ whose existence is stated in Proposition \ref{mapn} (viii). By \eqref{formularho} and the expression for the mean Conley-Zehnder index of $w_0$ stated in Proposition \ref{mapn} (viii) we have
\[
\overline{\rho}(\gamma_0) = \frac{1}{\pi + \sigma_{\varphi,\lambda}(w_0)} \left( 2 + \frac{1}{n} - n + \nu \right).
\]
In the case $n\geq 3$, the quantity between parentheses is negative for $\epsilon'$ - and hence $\nu$ - small enugh, so the upper bound on the action stated  Proposition \ref{mapn} (viii) implies
\[
\begin{split}
\overline{\rho}(\gamma_0) &\leq \frac{1}{\pi - \pi + \frac{\pi}{n} (1+\nu)+ \nu} \left( 2 + \frac{1}{n} - n + \nu \right) \\ &= \frac{1}{\pi \left(1+\nu + \frac{n}{\pi} \nu\right)} \left( - (n-1)^2 + 2 + n \nu \right).
\end{split}
\]
By choosing $\epsilon'$ and hence $\nu$ small enough we then obtain the inequality
\begin{equation}
\label{prov}
\overline{\rho}(\gamma_0) \leq\frac{1}{\pi} \left( - (n-1)^2 + 2 + \frac{\epsilon}{2} \right) \qquad \forall n\geq 3.
\end{equation}
In the case $n=2$, the lower bound on the action from Proposition \ref{mapn} (i) gives us
\[
\overline{\rho}(\gamma_0) = \frac{1}{\pi + \sigma_{\varphi,\lambda}(w_0)} \left( \frac{1}{2} + \nu \right) < \frac{1}{\pi - \frac{\pi}{2}} \left( \frac{1}{2} + \nu \right) = \frac{1}{\pi} ( 1 + 2\nu).
\]
By choosing $\epsilon'$ and hence $\nu$ small enough we obtain
\[
\overline{\rho}(\gamma_0) \leq \frac{1}{\pi} \left( 1 + \frac{\epsilon}{2} \right),
\]
and hence the inequality \eqref{prov} holds also in the case $n=2$:
\begin{equation}
\label{r6}
\overline{\rho}(\gamma_0) \leq\frac{1}{\pi} \left( - (n-1)^2 + 2 + \frac{\epsilon}{2} \right) \qquad \forall n\geq 2.
\end{equation}

Now consider the quantities
\[
s(\alpha) = T_{\min}(\alpha) \, \inf_{(\gamma,T) \in \mathcal{P}(\alpha)} \overline{\rho}(\gamma) \qquad \mbox{and} \qquad S(\alpha) = T_{\min}(\alpha) \, \sup_{(\gamma,T) \in \mathcal{P}(\alpha)} \overline{\rho}(\gamma)
\]
defined in the introduction. As we have seen, the contact form $\alpha$ under consideration has $T_{\min}(\alpha)=\pi$ and hence the identity \eqref{r1} and the upper bounds \eqref{r2} and \eqref{r4} imply
\[
S(\alpha) = 2.
\]
The lower bounds \eqref{r3} and \eqref{r5} together with the upper bound \eqref{r6} imply
\[
- (n-1)^2 + 2 < - (n-1)^2 + 2 + \delta \leq  s(\alpha) \leq - (n-1)^2 + 2 + \frac{\epsilon}{2} < - (n-1)^2 + 2 + \epsilon.
\]
Since the fact that $\alpha$ is dynamically convex for $n=2$ has already been checked in the proof of Theorem \ref{T:main_DC}, this concludes the proof of Theorem \ref{thm0}.


\begin{thebibliography}{AAMO08}

\bibitem[ABHS17]{abhs17b}
A.~Abbondandolo, B.~Bramham, U.~L. Hryniewicz, and P.~A.~S. Salom{\~a}o,
  \emph{Sharp systolic inequalities for {R}eeb flows on the three-sphere}, Invent.\ Math., First Online: 16 September 2017.

\bibitem[APB14]{apb14}
J.~C. \'{A}lvarez Paiva and F.~Balacheff, \emph{Contact geometry and
  isosystolic inequalities}, Geom. Funct. Anal. \textbf{24} (2014), 648--669.
  
\bibitem[AAKO14]{ako14}
S.~Artstein-Avidan, R.~Karasev, and Y.~Ostrover, \emph{From symplectic
  measurements to the {M}ahler conjecture}, Duke Math. J. \textbf{163} (2014),
  2003--2022.

\bibitem[AAMO08]{amo08}
S.~Artstein-Avidan, V.~Milman, and Y.~Ostrover, \emph{The {M}-ellipsoid,
  symplectic capacities and volume}, Comm. Math. Helv. \textbf{83} (2008),
  359--369.  

\bibitem[Hof93]{hof93}
H.~Hofer, \emph{Pseudoholomorphic curves in symplectizations with applications
  to the {W}einstein conjecture in dimension three}, Invent. Math. \textbf{114}
  (1993), 515--563.

\bibitem[HWZ98]{hwz98}
H.~Hofer, K.~Wysocki, and E.~Zehnder, \emph{The dynamics on three-dimensional
  strictly convex energy surfaces}, Ann. of Math. \textbf{148} (1998),
  197--289.

\bibitem[HZ90]{hz90}
H.~Hofer and E.~Zehnder, \emph{A new capacity for symplectic manifolds},
  Analysis, et cetera, Academic Press, Boston, MA, 1990, pp.~405--427.

\bibitem[Rab78]{rab78}
P.~H. Rabinowitz, \emph{Periodic solutions of {H}amiltonian systems}, Comm.
  Pure Appl. Math. \textbf{31} (1978), 157--184.

\bibitem[RS95]{rs95}
J.~Robbin and D.~Salamon, \emph{The spectral flow and the {M}aslov index},
  Bull. London Math. Soc. \textbf{27} (1995), 1--33.

\bibitem[Tau07]{tau07}
C.~H. Taubes, \emph{The {S}eiberg-{W}itten equations and the {W}einstein
  conjecture}, Geom. Topol. \textbf{11} (2007), 2117--2202.

\bibitem[Vit00]{vit00}
C.~Viterbo, \emph{Metric and isoperimetric problems in symplectic geometry}, J.
  Amer. Math. Soc. \textbf{13} (2000), 411--431.

\end{thebibliography}

\providecommand{\bysame}{\leavevmode\hbox to3em{\hrulefill}\thinspace}
\providecommand{\MR}{\relax\ifhmode\unskip\space\fi MR }
\providecommand{\MRhref}[2]{%
  \href{http://www.ams.org/mathscinet-getitem?mr=#1}{#2}
}
\providecommand{\href}[2]{#2}

\end{document}